\newtheorem{theorem}{Theorem}[section]
\newtheorem{proposition}[theorem]{Proposition}
\newtheorem{definition}[theorem]{Definition}
\newtheorem{lemma}[theorem]{Lemma}
\newtheorem{conjecture}[theorem]{Conjecture}
\newtheorem{ex}[theorem]{Example}
\newtheorem{remark}[theorem]{Remark}
\newtheorem{question}[theorem]{Question}
\numberwithin{equation}{section}
\newcommand{\h}{\mathfrak H}
\newcommand{\g}{\mathfrak H^2}
\newcommand{\G}{\mathcal G}
\newcommand{\R}{\mathbb R}
\newcommand{\mz}{\mathcal Z}
\newcommand{\Z}{{\mathbb Z}}
\newcommand{\N}{{\mathbb N}}
\newcommand{\C}{{\mathbb C}}
\newcommand{\Q}{{\mathbb Q}}
\newcommand{\gsh}{\boxdot}
\newcommand{\sh}{\mathcyr {sh}}
\newcommand{\shh}{\,\mathcyr {sh}\,}
\newcommand{\mg}{\mathfrak{g}}
\newcommand{\dif}{ \operatorname{d}}
\DeclareMathOperator{\ds}{\operatorname{ds}}
\newcommand{\mes}{\mathcal{E}}
\newcommand{\geng}{T}
\newcommand{\bigslant}[2]{{\raisebox{.2em}{$#1$}\left/\raisebox{-.2em}{$#2$}\right.}}
\DeclareRobustCommand{\mb}{\genfrac{[}{]}{0pt}{}}
\DeclareRobustCommand{\HH}{H\genfrac{(}{)}{0pt}{}}
\DeclareRobustCommand{\MM}{M\genfrac{(}{)}{0pt}{}}
\DeclareRobustCommand{\GG}{\geng\genfrac{(}{)}{0pt}{}}
\title{Double shuffle relations for $q$-analogues of multiple zeta values, their derivatives and the connection to multiple Eisenstein series}
\author{Henrik Bachmann}
\begin{document}
\date{\today}
\maketitle

\begin{abstract} We study a certain class of $q$-analogues of multiple zeta values, which appear in the Fourier expansion of multiple Eisenstein series. Studying their algebraic structure and their derivatives we propose conjectured explicit formulas for the derivatives of double and triple Eisenstein series.
\end{abstract}

\section{Introduction}
For $k_1,\dots,k_{r-1} \geq 1, k_r \geq 2$ the multiple zeta value $\zeta(k_1,\ldots,k_r)$ is defined by
\begin{equation} \label{eq1_2}
\zeta(k_1,\ldots,k_r)=\sum_{0<m_1<\cdots<m_r} \frac{1}{m_1^{k_1}\cdots m_r^{k_r}} .
\end{equation}
By $r$ we denote its depth, $k_1+\dots+k_r$ will be called its weight and for the $\Q$-vector space spanned by all multiple zeta values we write $\mz$. These numbers have been studied recently in many different contexts in mathematics and theoretical physics. In \cite{gkz} the authors studied several connections of double zeta values (the $r=2$ case of \eqref{eq1_2}) to modular forms for the full modular group. One famous result of \cite{gkz} is the relationship between linear relations between $\zeta(a,b)$ with both $a$ and $b$ beeing odd and cusp forms of weight $a+b$. For example it was shown, that the coefficient of the period polynomial of the first non trivial cusp form $\Delta$ in weight $12$ can be used to obtain the relation 
\begin{equation}\label{eq:exotic}
\frac{5197}{691} \zeta(12) =  168 \zeta(7,5)+150 \zeta(5,7) + 28 \zeta(3,9) \,.
\end{equation}
Further it is conjectured, that there is a one-to-one correspondence between cusp forms and these type of relations among double zeta values. Another connection between double zeta values and modular form which was first introduced in \cite{gkz} are double Eisenstein series. These can be seen as a mixture of classical Eisenstein series and double zeta values. The higher depth case, the multiple Eisenstein series, where then studied in \cite{Ba}. For $k_1,\dots,k_r \geq 2$ the multiple Eisenstein series $G_{k_1,\dots,k_r}(\tau)$ is defined\footnote{Since the sum in \eqref{eq_gk} is just absolute convergent in the cases $k_r\geq 3$ one uses Eisenstein summation for $k_r =2$} by 
\begin{equation} \label{eq_gk}
 G_{k_1,\ldots,k_r}(\tau) =   \sum_{\substack{0\prec \lambda_1\prec \cdots\prec \lambda_r\\ \lambda_i\in\Z \tau+\Z}} \frac{1}{\lambda_1^{k_1}\cdots \lambda_r^{k_r}} \,,
 \end{equation}
where $\tau \in \left\{ x+iy \in \C \mid  y>0 \right\}$ is an element in the upper half plane and the order $\prec$ on $\Z \tau + \Z$ is defined by $m_1 \tau + n_1 \prec m_2 \tau + n_2 :\Leftrightarrow (m_1 < m_2) \vee (m_1 = m_2 \wedge n_1 < n_2 )$. In the case $r=1$ these are the classical Eisenstein series which have the following Fourier expansion ($k\geq 2$)
	\[ G_k(\tau) = \zeta(k) + \frac{(-2\pi i)^k}{(k-1)!} \sum_{n = 1}^\infty \sigma_{k-1}(n) q^{n}\,\qquad (q=e^{2\pi i \tau}) \,,\]
with the divisor-sum $\sigma_{k-1}(n)=\sum_{d | n} d^{k-1} $. The main result of \cite{Ba} was that the multiple Eisenstein series also have a Fourier expansion and that it can be written as
\[  G_{k_1,\ldots,k_r}(\tau) =  \zeta(k_1,\dots,k_r) + \sum_{\substack{1<l<r\\m_1+\dots+m_r=k_1+\dots+k_r}} \alpha_{m_1,\dots,m_l} \cdot \hat{g}_{m_{l+1},\dots,m_r}(q) +\hat{g}_{k_1,\dots,k_r}(q) \,, \]
where the $\alpha_{m_1,\dots,m_l} $ are  $\Q$-linear combinations of multiple zeta values of depth $l$ and weight $m_1+\dots+m_l$ and $\hat{g}_{k_1,\dots,k_r}(q) = (-2 \pi i)^{k_1+\dots+k_r} g_{k_1,\dots,k_r}(q)$. The series $ g_{k_1,\dots,k_r}(q) \in \Q[[q]]$ will be studied in detail in this work and its coefficient can be seen as a multiple version of the divisor sums. 

 By some classical results of modular forms together with the results in \cite{Ba} or \cite{BT} it can be shown that every modular form can be written in terms of multiple Eisenstein series. For example it is
\[
\frac{(2\pi i)^{12}}{2^6 \cdot 5 \cdot 691} \cdot \Delta = \frac{5197}{691} G_{12} - 168 G_{7,5}-150 G_{5,7} - 28 G_{3,9} \,,
\]
which gives another way to prove the relation \eqref{eq:exotic} since the constant term of the Fourier expansion of the cusp form on the left hand side vanishes.

Since there just exist multiple Eisenstein series for the cases $k_1,\dots,k_r \geq 2$ a natural question was if there is an extended definition of  $G_{k_1,\ldots,k_r}(\tau)$ for the cases in which the multiple zeta value $\zeta(k_1,\ldots,k_r)$ exists. This question was answered in \cite{BT}, where the authors introduced the functions  $G^\sh_{k_1,\ldots,k_r}(\tau)$ for all $k_1,\dots,k_r \geq 1$, which coincide with  $G_{k_1,\ldots,k_r}(\tau)$ in the cases $k_1,\dots,k_r \geq 2$. These series have a Fourier expansion of the form 
\[  G^\sh_{k_1,\ldots,k_r}(\tau) =  \zeta^\sh(k_1,\dots,k_r) + \sum_{\substack{1<l<r\\m_1+\dots+m_r=k_1+\dots+k_r}} \alpha_{m_1,\dots,m_l} \cdot \hat{g}^\sh_{m_{l+1},\dots,m_r}(q) +\hat{g}^\sh_{k_1,\dots,k_r}(q) \,, \]
where the $\zeta^\sh(k_1,\dots,k_r) \in \mz $ are the shuffle-regularized multiple zeta values (\cite{IKZ}) and again $ \alpha_{m_1,\dots,m_l} \in \mz$. Here it is $\hat{g}^\sh_{k_1,\dots,k_r}(q) = (-2 \pi i)^{k_1+\dots+k_r} g^\sh_{k_1,\dots,k_r}(q)$, where the $g^\sh$ can be seen as "shuffle regularized" versions of the functions $g$. For example it is
\begin{align*}
G^\sh_{1,3}(\tau) = \zeta(1,3) - \zeta(2)\cdot (2 \pi i)^2\cdot g^\sh_2(q) + (2\pi i)^4\cdot g^\sh_{1,3}(q)\,. 
\end{align*}
We will study the algebraic structure of the series $g^\sh_{k_1,\dots,k_r}(q)$, to make progress towards a question on multiple Eisenstein series and their derivatives which we will describe in the following.

Denote by $\mathcal{E}$ the $\Q$-vector space spanned by all $G^\sh_{k_1,\ldots,k_r}$ for\footnote{We set $G^\sh_{k_1,\ldots,k_r}(\tau)=1$ for $r=0$.} $r\geq 0$ and $k_1,\dots,k_r \geq 1$ and consider the projection $\pi$ to the constant term in the Fourier expansion, i.e.
\begin{align*}
	\pi : \mathcal{E} &\longrightarrow \mz \\
	  G^\sh_{k_1,\ldots,k_r} &\longmapsto \zeta^\sh(k_1,\dots,k_r) \,.
\end{align*} 
Since the space of modular forms is contained in the space $\mathcal{E}$ it is clear that the space of cusp forms is contained in the kernel of the map $\pi$. 

It is therefore an interesting question if the kernel of $\pi$ consists more than just of cusp forms. In fact there are already non-trivial elements in the kernel of $\pi$ in weight $3$. Since $\zeta(1,2) - \zeta(3)=0$ it is $G^\sh_{1,2}-G^\sh_3 \in \ker \pi$, but $G^\sh_{1,2}-G^\sh_3  \neq 0$. We will see that $G^\sh_{1,2} - G^\sh_3 = \frac{(2\pi i)^2 }{2} \dif G_1$, where the operator $\dif=q \frac{d}{dq}$ plays also an important role in the theory of modular forms. Another way of interpreting this is that $(2\pi i)^2 \dif G_1$ is again an element in $\mes$.  In general it is not known, but expected, that the space $\mes$ is closed under $\dif$, i.e. $(2\pi i)^2\dif \mes \subset \mes$. This question will be one motivation for the present paper.

For this we will study two types of $q$-series. The first one, first introduced in \cite{BK} and \cite{B}, will be the double indexed series $g^{(d_1,\dots,d_r)}_{k_1,\dots,k_r}(q)$ for $d_1,\dots,d_r \geq 0$ and $k_1,\dots,k_r\geq 1$. Similar to multiple zeta values there are two different ways to express the product of these series and we will describe this double shuffle structure in detail. The other series, already mentioned before, are the $g^\sh_{k_1,\dots,k_r}(q)$ appearing in the Fourier expansion of multiple Eisenstein series. The $g^\sh$ can be written explicitly in terms of the double-indexed $g$. 
Though the behavior of $g$ under the operator $\dif$ is well-understood (See Section \ref{sec:difg}), the behavior of $g^\sh$ under this operator is an open problem.
\begin{figure}[H]	

\begin{center}
	\includegraphics{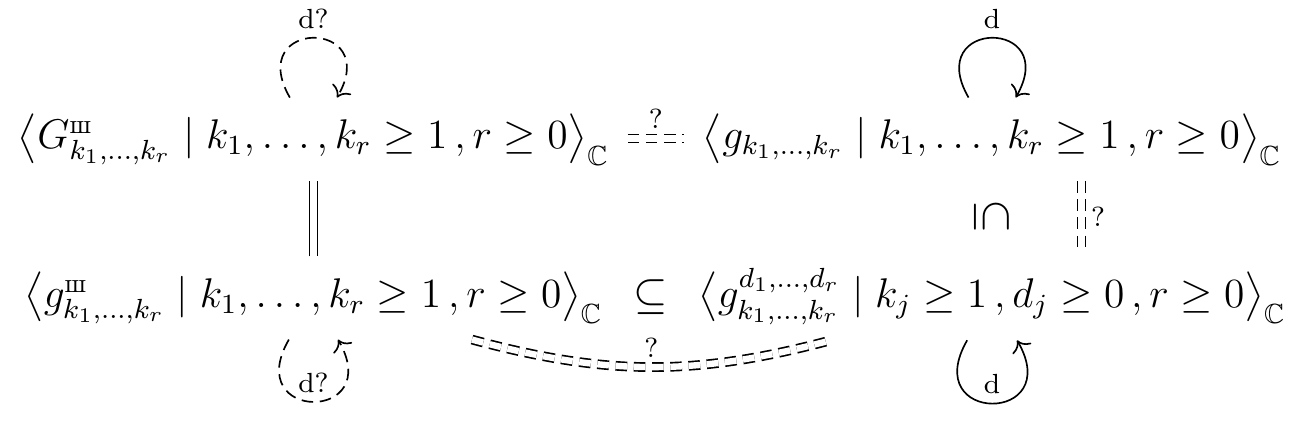}
	\caption{Overview of the spaces spanned by $G^\sh, g$ and $g^\sh$ and the behavior of the operator $\dif = q \frac{d}{dq}$. The dashed equalities and lines are expected but unproven so far.}
\end{center}
\end{figure}

Since every $G^\sh$ can be written as a $\C$-linear combination of $g^\sh$ and vice versa, the space spanned by them are the same. Therefore to prove that the multiple Eisenstein series are closed under the operator $\dif$ it suffices to prove it for the functions $g^\sh$. We will present new results on this and prove the following:
\begin{theorem}\label{thm:intro} \begin{enumerate}[i)]
		\item  For $k\geq 1$ and $\dif = q \frac{d}{dq}$ we have
		\begin{equation*} 
		\frac{1}{k} \dif g^\sh_k = (k+1) g^\sh_{k+2} - \sum_{n=2}^{k+1} (2^n - 2) g^\sh_{k+2-n,n} \,.
		\end{equation*}		
		\item For $k_1,k_2,k_3 \geq 2$ the series $\dif g^\sh_{k_1,k_2}$ and $\dif g^\sh_{k_1,k_2,k_3}$ can be written as linear combinations of $g^\sh$.
	\end{enumerate}
\end{theorem}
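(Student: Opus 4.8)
The plan is to reduce the statement to the double-indexed series $g^{(d_1,\dots,d_r)}_{k_1,\dots,k_r}$, on which the action of $\dif$ is explicit, and then to return to the $g^\sh$ by eliminating the upper indices $d_i$ using the double shuffle relations. Since $\dif = q\frac{d}{dq}$ brings down the exponent $\sum_i u_i v_i$ of $q$ in the defining $q$-series, each application of $\dif$ raises, in every resulting summand, one lower index $k_i$ by one and the corresponding upper index $d_i$ by one; in particular $\dif g_{k_1,k_2} = k_1\, g^{(1,0)}_{k_1+1,k_2} + k_2\, g^{(0,1)}_{k_1,k_2+1}$, and at depth one $\frac1k \dif g^\sh_k = g^{(1)}_{k+1}$, using $g^\sh_k = g^{(0)}_k$. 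Thus the whole content of the theorem is the assertion that the upper-index-one series produced by $\dif$ lie again in the span of the $g^\sh$, that is, in the span of series all of whose upper indices vanish.

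For part i) I would prove the required reduction $g^{(1)}_{k+1} = (k+1)\,g^\sh_{k+2} - \sum_{n=2}^{k+1}(2^n-2)\,g^\sh_{k+2-n,n}$ at the level of one-variable generating functions, where it collapses to a single exponential identity. The coefficients $2^n-2$ are exactly the Taylor coefficients of $(e^t-1)^2$: since $\sum_{n\ge 0}(2^n-2)\tfrac{t^n}{n!} = e^{2t}-2e^t$ has vanishing linear coefficient and constant coefficient $-1$, one obtains $\sum_{n\ge 2}(2^n-2)\tfrac{t^n}{n!} = (e^t-1)^2$. The factorisation $(e^t-1)^2$ is precisely the signature of a depth-two (stuffle-type) contribution, so matching the generating series of $g^{(1)}_{k+1}$ against those of the depth-one and depth-two families yields the closed formula, the boundary term $n=k+1$ (first index $1$) being read through the shuffle regularisation as $g^\sh_{1,k+1}$.

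For part ii) the same elimination has to be carried out in depths two and three. For $k_1,k_2,k_3 \ge 2$ there is no regularisation, so $g^\sh_{k_1,k_2} = g_{k_1,k_2}$ and $g^\sh_{k_1,k_2,k_3} = g_{k_1,k_2,k_3}$, and the explicit derivative formula above writes $\dif g^\sh_{k_1,k_2}$ and $\dif g^\sh_{k_1,k_2,k_3}$ as $\Q$-linear combinations of double-indexed series in which exactly one upper index equals one. I would then invoke the double shuffle structure: in each fixed weight the finitely many stuffle and shuffle relations among the double-indexed $g$ provide enough independent identities to rewrite every such upper-index-one series in terms of series with all upper indices zero, hence in terms of $g^\sh$ of depth at most three, respectively four. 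Concretely this is a finite linear-algebra computation, carried out weight by weight.

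The step I expect to be the main obstacle is precisely this last elimination of the upper indices. At depth one the clean identity $(e^t-1)^2$ does all the work, but in depths two and three one must verify that the double shuffle relations actually span a large enough subspace of relations to cancel every upper index — a delicate, finite check that does not visibly persist to arbitrary depth, in accordance with the dashed, still conjectural, arrows in the figure. This is exactly why the argument must proceed depth by depth, and why the restriction to double and triple series in part ii) is intrinsic to the method rather than a matter of convenience.
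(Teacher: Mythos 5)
Your reduction in part i) is correct ($g^\sh_k = g_k$, so $\tfrac1k\dif g^\sh_k = g^{(1)}_{k+1}$ by \eqref{eq:difong}), and the generating-function strategy is in fact the one the paper follows in the proof of Theorem \ref{thm:len1}. But the step you describe as collapsing ``to a single exponential identity'' is where the entire content of the theorem lies, and the $(e^t-1)^2$ remark does not supply it. In the ordinary (non-exponential) generating variables used here, the coefficients $2^n-2$ arise as $2\cdot 2^{n-1}-2\cdot 1^{n-1}$ from the specializations $\geng_\sh(X,2X)$ and $\geng_\sh(X,X)$, and what actually has to be proven is
\[ \geng(X) + \HH{2}{X} = \frac{d}{dX}\geng(X) - 2\geng_\sh(X,2X) + 2\geng_\sh(X,X)\,. \]
This does not follow from bookkeeping of binomial coefficients: the paper needs three genuine inputs, namely the shuffle functional equation \eqref{eq:tshfcteq} (i.e.\ that $\mg^\sh$ is a homomorphism on $\h^1_\sh$, which gives $\geng(X)^2 = 2\geng_\sh(X,2X)$), the square identity $\bigl(\tfrac{e^Xq^n}{1-e^Xq^n}\bigr)^2 = \tfrac{d}{dX}\tfrac{e^Xq^n}{1-e^Xq^n} - \tfrac{e^Xq^n}{1-e^Xq^n}$ (which gives $\geng(X)^2 = 2\geng(X,X)+\tfrac{d}{dX}\geng(X)-\geng(X)$), and the definitional depth-two relation $2\geng_\sh(X,X)=2\geng(X,X)+\HH{2}{X}$. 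None of these appears in your outline; ``matching the generating series of $g^{(1)}_{k+1}$ against the depth-one and depth-two families'' is a restatement of the identity to be proven, not a proof of it.

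Part ii) has the more serious gap. You propose to express $g^{(1,0)}_{k_1+1,k_2}$, $g^{(0,1)}_{k_1,k_2+1}$ and their depth-three analogues in terms of $g^\sh$ by invoking that the double shuffle relations ``provide enough independent identities'', to be verified by ``a finite linear-algebra computation, carried out weight by weight''. This cannot work as stated: the theorem quantifies over all $k_1,k_2,k_3\geq 2$, hence over infinitely many weights, and a weight-by-weight finite verification proves nothing beyond the weights actually computed; moreover the assertion that the stuffle and shuffle relations span enough of the relation space is precisely what must be proven, and no dimension count is available since the dimensions of the spaces involved are unknown. The paper replaces this appeal to generic spanning by closed-form statements uniform in the weight: Lemma \ref{lem:g10ingsh} shows $g^{(1,0)}_{k_1,k_2},\,g^{(0,1)}_{k_1,k_2}\in\G^\sh_{\leq k_1+k_2+1}$ for $k_1,k_2\geq2$ by explicitly chosen quasi-shuffle products combined with part i), and Lemma \ref{lem:gdsh1} computes the double-shuffle defects $\mg^\sh(\ds(u,v))$ explicitly modulo lower weight for specific words $u,v$; inverting these yields the explicit formulas of Theorem \ref{thm:dgshlen2len3}, valid for every weight, and part ii) then follows because the error space $\G^\sh_{\leq k+1}$ is itself, by definition, spanned by $g^\sh$'s. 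To repair your plan you would need to exhibit such explicit identities---concrete words whose double-shuffle defect isolates each upper-index-one series---rather than assume the relations are plentiful enough.
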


For Theorem \ref{thm:intro} ii) we will present explicit formulas for the mentioned linear combination modulo lower weight terms (Theorem \ref{thm:dgshlen2len3}). Since it is expected (Question \ref{qu:mes}) that the space spanned by the $g^\sh$ modulo lower weight terms has the same algebraic structure as the space of multiple Eisenstein series this will lead us to propose conjectures on explicit formulas for the derivative of double- and triple Eisenstein series.\\

{\bf Acknowledgments}\\
The author would like to thank Hidekazu Furusho, Ulf K\"uhn, Koji Tasaka and Wadim Zudilin for their helpful comments and suggestions. This work was written while the author was an Overseas researcher under Postdoctoral Fellowship of Japan Society for the Promotion of Science. It was partially supported by JSPS KAKENHI Grant Number JP16F16021.

\section{Algebraic setup}

We recall Hoffman`s algebraic setup for quasi-shuffle products (\cite{H1}) but with slightly different notations. First we start with the two product structures coming from the theory of multiple zeta values. After this we introduce an analogue setup for the product structure of the $q$-analogues which will be introduced in the next section.
\subsection{Classical case}
Write $\h = \Q\langle e_0,e_1\rangle$ for the non commutative polynomial algebra of indeterminates $e_0$ and $e_1$ over $\Q$, and define its subalgebras $\h^0$ and $\h^1$ by
\[ \h^0 = \Q\cdot 1 + e_1 \h e_0 \subset \h^1 = \Q \cdot 1 + e_1 \h \subset \h \,,\]
where $1$ denotes the empty word here. For $k\geq 1$ we put $e_k = e_1 e_0^{k-1}$, so that the monomials $e_{k_1} \dots e_{k_r}$ form a basis of $\h^1$ and the monomials $e_{k_1} \dots e_{k_r}$ with $k_r \geq 2$ form a basis of $\h^0$.

We consider two $\Q$-bilinear commutative products $\sh$ on $\h$ and $\ast$ on $\h^1$, called the shuffle and the harmonic (or stuffle) products, which are defined by
\begin{align*}
1 \shh w = w \shh 1 &= w \qquad (w \in \h)\,,\\
av \shh bw = a (v \shh bw) &+ b(av \shh w) \qquad (a,b, \in \{ e_0, e_1\}, v,w \in \h)
\end{align*}
and 
\begin{align*}
1 \ast w = w \ast 1 &= w \qquad (w \in \h^1)\,,\\
e_{k_1} v \ast e_{k_2} w = e_{k_1} (v \ast e_{k_2} w) &+ e_{k_2} (e_{k_1} v \ast w) + e_{k_1+k_2} (v \ast w) \qquad (k_1,k_2 \geq 1, v,w \in \h^1)\,.
\end{align*}
Denote by $\h_\sh$ (resp. $\h^1_\ast$) the commutative $\Q$-algebra $\h$ (resp. $\h^1$) equipped with the multiplication $\sh$ (resp. $\ast$). It is easy to see that the subspaces $\h^1$ and $\h^0$ of $\h$ (resp. the subspace $\h^0$ of $\h^1$) are closed under $\sh$ (resp. $\ast$) and we therefore write $\h_\sh^1$ and $\h_\sh^0$ (resp. $\h^0_\ast$) for the corresponding subalgebras of $\h_\sh$ (resp. $\h^1_\ast$).

Identifying an indexset $(k_1,\dots,k_r)$ with the word $e_{k_1}, \dots, e_{k_r}$ it is easy to see that the indexsets for which the multiple zeta values $\zeta(k_1,\dots,k_r)$ exists correspond exactly to the words in $\h^0$. One therefore can interpret the multiple zeta values as a $\Q$-linear map $\zeta : \h^0 \rightarrow \R$, where we send the empty word in $\h^0$ to $1$. It is well known that this map is a $\Q$-algebra homomorphism from both $\h^0_\sh$ and $\h^0_\ast$ to $\R$, i.e. in particular it is
\begin{equation}\label{eq:dshmzv}
    \zeta(w \shh v) = \zeta(w) \cdot \zeta(v) =  \zeta(w \ast v) \,,
\end{equation}
for any words $w,v \in \h^0$. These relations are known as (finite) double shuffle relations. Another well known fact (See \cite{IKZ}) is, that the map $\zeta$ can be extended to algebra homomorphisms $\zeta^\sh :  \h^1_\sh \rightarrow \R$ and  $\zeta^\ast:  \h^1_\ast \rightarrow \R$, which are uniquely determined by $\zeta^\sh(e_1) = \zeta^\ast(e_1) =0$ and $\zeta^\sh(w) = \zeta^\ast(w) = \zeta(w)$ for $w\in \h^0$. 

Define for words $u,v \in \h^1$ the element $\ds(u,v) \in \h^1$ by
\begin{equation}\label{eq:defds}\ds(u,v) = u \ast v - u \shh v \,. 
\end{equation}
If both $u,v \in \h^0$ we have by \eqref{eq:dshmzv} that $\zeta(\ds(u,v))=0$. But more generally we have the following Theorem, which conjecturally gives all linear relations between multiple zeta values.

\begin{theorem} (Extended double shuffle relations)\label{thm:edsh} For $u \in \h^0$ and $v \in \h^1$ it is
\[ \zeta^\sh(\ds(u,v)) = \zeta^\ast(\ds(u,v)) = 0 \,.\]
\end{theorem}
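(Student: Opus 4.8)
The plan is to prove the stronger statement that $\ds(u,v)$ lies in the kernel of both regularizations \emph{before} specializing, by introducing a formal parameter as in \cite{IKZ}. First I would upgrade $\zeta^\sh$ and $\zeta^\ast$ to polynomial-valued maps. Using that $\h^1_\sh$ (resp.\ $\h^1_\ast$) is a polynomial algebra over $\h^0_\sh$ (resp.\ $\h^0_\ast$) in the single generator $e_1$, there are unique $\Q$-algebra homomorphisms
\[ Z^\sh : \h^1_\sh \longrightarrow \R[T], \qquad Z^\ast : \h^1_\ast \longrightarrow \R[T] \]
with $Z^\sh(w)=Z^\ast(w)=\zeta(w)$ for $w\in\h^0$ and $Z^\sh(e_1)=Z^\ast(e_1)=T$. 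Specializing $T=0$ recovers the maps from the excerpt, so that $\zeta^\sh = Z^\sh|_{T=0}$ and $\zeta^\ast = Z^\ast|_{T=0}$. The single nontrivial input I would invoke is the comparison theorem of \cite{IKZ}: there is an invertible $\R$-linear operator $\rho$ on $\R[T]$, fixing the constants $\R$ and given on generating series by $\rho(e^{Tx}) = A(x)\,e^{Tx}$ with $A(x)=\exp\bigl(\sum_{n\geq 2}\tfrac{(-1)^n}{n}\zeta(n)x^n\bigr)$, such that $Z^\ast = \rho\circ Z^\sh$.

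Granting this, the theorem follows by a short formal computation whose point is that $u\in\h^0$ forces $Z^\sh(u)=Z^\ast(u)=\zeta(u)$ to be a \emph{constant} in $T$, while $v$ may be arbitrary in $\h^1$. Since $Z^\ast$ is an algebra homomorphism for $\ast$, since $Z^\sh$ is one for $\shh$, and since $Z^\ast=\rho\circ Z^\sh$, I would compute in $\R[T]$
\[
Z^\ast(u\ast v) = \zeta(u)\,Z^\ast(v), \qquad Z^\ast(u\shh v) = \rho\bigl(Z^\sh(u\shh v)\bigr) = \rho\bigl(\zeta(u)\,Z^\sh(v)\bigr) = \zeta(u)\,Z^\ast(v),
\]
where the last equality uses that $\rho$ is $\R$-linear and hence commutes with multiplication by the constant $\zeta(u)$. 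Subtracting gives $Z^\ast(\ds(u,v)) = 0$ as a polynomial in $T$. Because $\rho$ is invertible, also $Z^\sh(\ds(u,v)) = \rho^{-1}\bigl(Z^\ast(\ds(u,v))\bigr) = 0$. Evaluating both identities at $T=0$ then yields $\zeta^\ast(\ds(u,v)) = \zeta^\sh(\ds(u,v)) = 0$, which is the assertion.

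The formal computation above is routine; the conceptual and technical heart — which I would either cite from \cite{IKZ} or reconstruct — is the comparison theorem $Z^\ast=\rho\circ Z^\sh$, and establishing it is the main obstacle. One must control precisely how the two regularizations of the divergent words differ, which amounts to comparing the $\log N$-asymptotic expansions of the truncated harmonic sums with those of the truncated iterated integrals and tracking the resulting $\zeta(n)$-corrections, exactly the data encoded in $A(x)$. A secondary point to verify carefully is the structural fact that $e_1$ is transcendental over $\h^0$ for both products, so that $Z^\sh$ and $Z^\ast$ are well defined; this is where the freeness of the quasi-shuffle algebra (\cite{H1}) enters.
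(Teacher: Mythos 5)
Your argument is correct, but it is organized differently from the paper's proof, which is a pure citation: the paper disposes of the theorem in one line by invoking Theorem 1 together with Theorem 2 (iv) and (iv') of \cite{IKZ}. You instead import only the deeper half of that citation, namely the comparison theorem relating the two polynomial-valued regularizations via the $\R$-linear map $\rho$, and you reconstruct the deduction of the extended double shuffle relations from it. Your reduction is sound: the existence and uniqueness of $Z^\sh$ and $Z^\ast$ rest on $\h^1_\sh$ and $\h^1_\ast$ being polynomial algebras in $e_1$ over $\h^0_\sh$ and $\h^0_\ast$ (Radford's theorem for $\shh$, Hoffman's quasi-shuffle isomorphism \cite{H1} for $\ast$, both recorded in \cite{IKZ}); the central computation correctly exploits that $u\in\h^0$ forces $Z^\sh(u)=Z^\ast(u)=\zeta(u)$ to be constant in $T$, so that the merely $\R$-linear (not multiplicative) map $\rho$ commutes with multiplication by it; and invertibility of $\rho$ plus evaluation at $T=0$ finishes the argument. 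This is in substance exactly how \cite{IKZ} deduce their Theorem 2 from their Theorem 1, so you have filled in the portion of the cited proof that the paper leaves to the reference, at the cost of still having to cite (or prove) the comparison theorem, which you rightly identify as the real obstacle. One cosmetic slip: in \cite{IKZ} the comparison runs $Z^\sh = \rho \circ Z^\ast$ rather than $Z^\ast = \rho \circ Z^\sh$; since $\rho$ is invertible and $\rho^{-1}$ is again $\R$-linear, fixes constants, and has the same shape (replace $A(x)$ by $A(x)^{-1}$), this does not affect your argument in any way.
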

\begin{proof} This is Theorem 1  together with Theorem 2 (iv) and (iv`)  in \cite{IKZ}.
\end{proof}

\subsection{Setup for the $q$-analogue case}
We now want to recall a similar algebraic setup from \cite{B} for our $q$-analogue which will be defined in the next section. In analogue to the space $\h^1$, which was spanned by words in the letters $e_{k}$ with $k\geq 1$,  we will now consider the space $\g$ spanned by words in the double-indexed letters $e^{(d)}_k$ with $k\geq 1$ and $d\geq 0$. More precisely we define  $\g = \Q\langle A \rangle$ to be the noncommutative polynomial algebra of indeterminates $A = \{ e^{(d)}_k \mid  k\geq 1\,, d\geq 0 \}$  over $\Q$. 
\begin{definition}("Harmonic product analog" $\boxast$ on $\g$)
In analogy to the product $\ast$ on $\h^1$ we define the product $\boxast$ on $\g$ by $1 \boxast w = w \boxast 1 = w$ for $w \in \g$ and 
\begin{align*}
e^{(d_1)}_{k_1} v \boxast e^{(d_2)}_{k_2} w &= e^{(d_1)}_{k_1} (v \boxast e^{(d_2)}_{k_2} w) + e^{(d_2)}_{k_2} (e^{(d_1)}_{k_1} v \boxast w) + \binom{d_1+d_2}{d_1} e^{(d_1+d_2)}_{k_1+k_2} (v \boxast w)  \\
&+\left(\binom{d_1+d_2}{d_1}\sum_{j=1}^{k_1} \lambda^j_{k_1,k_2}  e^{(d_1+d_2)}_{j} +\binom{d_1+d_2}{d_1}\sum_{j=1}^{k_2} \lambda^j_{k_2,k_1}  e^{(d_1+d_2)}_{j} \right) (v \boxast w) \,,
\end{align*}
where the numbers $\lambda^j_{a,b}  \in \Q$ for $1 \leq j \leq a$ are defined as
\[ \lambda^j_{a,b} := (-1)^{b-1} \binom{a+b-j-1}{a-j} \frac{B_{a+b-j}}{(a+b-j)!} \,, \]
with $B_k$ being the $k$-th Bernoulli number.	
\end{definition}
 It can be checked that $\g$ equipped with this product becomes a commutative $\Q$-algebra which be denote by $\g_\boxast$ (\cite{B}, Theorem 3.6.). For example we have
\begin{align}\label{eq:gst23}
e^{(0)}_{2} \boxast e^{(0)}_{3}& = e^{(0)}_{2}e^{(0)}_{3} + e^{(0)}_{3}e^{(0)}_{2}+e^{(0)}_{5} - \frac{1}{12} e^{(0)}_{3} \,,\\\label{eq:gst1121}
e^{(1)}_{1} \boxast e^{(2)}_{1}&= e^{(1)}_{1}e^{(2)}_{1}+e^{(2)}_{1}e^{(1)}_{1}+3e^{(3)}_{2}-3e^{(3)}_{1}\,.
\end{align}
Notice that up to the term $- \frac{1}{12} e^{(0)}_{3}$ equation \eqref{eq:gst23} looks exactly like the harmonic product $e_{2} \ast e_{3} = e_{2}e_{3} + e_{3}e_{2}+e_{5} $ in $\h^1$.

 The reason for introducing double-indexes, i.e. the $d_j$, will become clear now when we will introduce the second product on $\g$ corresponding to the shuffle product $\sh$ on $\h^1$.
For this we first define for a fixed $r$ the following generating series of monomials in depth $r$
\[ \MM{X_1,\dots,X_r}{Y_1,\dots,Y_r} := \sum_{\substack{k_1,\dots,k_r \geq 1 \\ d_1, \dots, d_r \geq 0}} e^{(d_1)}_{k_1} \dots e^{(d_r)}_{k_r} X_1^{k_1-1} \dots X_r^{k_r-1} \cdot Y_1^{d_1} \dots Y_r^{d_r} \]
as an element in $\g[[X_1,\dots,X_r,Y_1,\dots,Y_r]]$, i.e. the variables $X_i$ and $Y_j$ are commuting for $1\leq i,j \leq r$.  

\begin{definition}
For $k_1,\dots,k_r \geq 1$, $d_1,\dots,d_r \geq 0$ and $w=e^{(d_1)}_{k_1}, \dots, e^{(d_r)}_{k_r}$ define $P(w)$ as the coefficients of
\[ \sum_{\substack{k_1,\dots,k_r \geq 1 \\ d_1, \dots, d_r \geq 0}} P(w) X_1^{k_1-1} \dots X_r^{k_r-1} \cdot Y_1^{d_1} \dots Y_r^{d_r} :=  \MM{Y_r,Y_{r-1}+Y_r,\dots,Y_1+\dots+Y_r}{X_r-X_{r-1}, X_{r-1}-X_{r-2},\dots,X_1} \,. \]
We define the $\Q$-linear map $P: \g \rightarrow \g$ by setting $P(1)=1$ and extending the above definition on monomials linearly to $\g$.
\end{definition}
Notice that the map $P$ is an involution on $\g$, i.e. $P(P(w))=w$ for all $w\in \g$. For $r=1$ the definition reads
\[ \sum_{\substack{k_1 \geq 1 \\ d_1\geq 0}} P(e^{(d_1)}_{k_1}) X_1^{k_1-1} Y_1^{d_1} :=  \MM{Y_1}{X_1} = \sum_{\substack{k_1 \geq 1 \\ d_1\geq 0}} e^{(d_1)}_{k_1} Y_1^{k_1-1} X_1^{d_1}   \]
and therefore $ P(e^{(d_1)}_{k_1})= e^{(k_1-1)}_{d_1+1}$. Other examples are
\begin{align}\label{eq:p1}
P(e^{(2)}_{1} e^{(1)}_{1}) &= e^{(0)}_{2}e^{(0)}_{3}+3e^{(0)}_{1}e^{(0)}_{4}    \,,\\\label{eq:p2}
P(e^{(1)}_{1} e^{(2)}_{1}) &=e^{(0)}_{3}e^{(0)}_{2}+ 2e^{(0)}_{2}e^{(0)}_{3}+3e^{(0)}_{1}e^{(0)}_{4}    
\end{align}
which can be obtained by calculation the coefficient of $X_1^0 X_2^0 Y_1^2 Y_2^1$ (resp. $X_1^0 X_2^0 Y_1^1 Y_2^2$) in $\MM{Y_2,Y_1+Y_2}{X_2-X_1,X_1}$.

\begin{definition}("Shuffle product analog" $\gsh$ on $\g$)
Define on $\g$ the product $\gsh$ for $u,v \in \g$ by
\[ u \gsh v = P( P(u) \boxast P(v) ) \,.  \]
\end{definition}
This product is commutative and associative which follows from the fact that $P$ is an involution together with the properties of $\boxast$. We denote by $\g_\gsh$ the corresponding $\Q$-algebra equipped with this product. 

\begin{ex} \label{ex:shuffleg}
We now use \eqref{eq:gst1121}, \eqref{eq:p1} and  \eqref{eq:p1} together with $P(e^{(0)}_{2})= e^{(1)}_{1}$ and $P(e^{(0)}_{3})= e^{(2)}_{1}$ to calculate $ e^{(0)}_{2} \gsh  e^{(0)}_{3}$:
\begin{align*}
e^{(0)}_{2} \gsh  e^{(0)}_{3} &=\,\, P( P(e^{(0)}_{2}) \boxast P(e^{(0)}_{3}) ) = P(e^{(1)}_{1} \boxast  e^{(2)}_{1}) \\
&=\,\, P(e^{(1)}_{1}e^{(2)}_{1}+e^{(2)}_{1}e^{(1)}_{1}+3e^{(3)}_{2}-3e^{(3)}_{1})\\
&=\,\,e^{(0)}_{3}e^{(0)}_{2}+ 3e^{(0)}_{2}e^{(0)}_{3}+6e^{(0)}_{1}e^{(0)}_{4}  +3e^{(1)}_{4}-3e^{(0)}_{4} \,.
\end{align*}
Compare this to the shuffle product $e_{2} \shh  e_{3} = e_{3}e_{2}+ 3e_{2}e_{3}+6e_{1}e_{4}$ on $\h^1_\sh$.
\end{ex}

\section{Certain $q$-analogues of multiple zeta values}
In recent years several different $q$-analogues of multiple zeta values have been studied. An overview of these different models can be for example found in \cite{Zh}. Our model we present here has its motivation in its appearance in the Fourier expansion of multiple Eisenstein series. It was first studied in \cite{BK} and later in more detail in \cite{B}.
In this section we want to introduce two types of $q$-series which are closely related to each other. We will construct two maps, where the first one, denoted by $\mg$, will be an algebra homomorphism from both $\g_\boxast$ and $\g_\gsh$ to $\Q[[q]]$. The multiplication of $\Q[[q]]$ here is the usual multiplication of formal $q$-series. Similar to the case of multiple zeta values we will obtain a large family of linear relations between these $q$-series, by comparing $\mg(u \boxast v)$ and $\mg(u \gsh v)$.

The second map, denoted by $\mg^\sh$, will be more closely related to multiple zeta values, since it will be an algebra homomorphism from $\h_\sh^1$ to $\Q[[q]]$. 

\subsection{The series $g^{(d_1,\dots,d_r)}_{k_1,\dots,k_r}$ and the map $\mg$}
In this section we will recall some of the result of \cite{B} and \cite{BK}. Here we use a different notation which matches the one used in \cite{BT}.

\begin{definition}
	For $k_1,\dots,k_r \geq 1$, $d_1,\dots,d_r \geq 0$  we define the following $q$-series\footnote{In \cite{B} a different notation and order was used. There the series $	g^{(d_1,\dots,d_r)}_{k_1,\dots,k_r}$ was called bi-bracket and it was denoted by $\mb{k_r,\dots,k_1}{d_r,\dots,d_1}$ and instead of $\G$ the author used $\mathcal{BD}$. }
	\begin{align*}
	g^{(d_1,\dots,d_r)}_{k_1,\dots,k_r}(q) :=& \sum_{\substack{0 < u_1 < \dots < u_r \\0<v_1, \dots , v_r }} \frac{u_1^{d_1}}{d_1!} \dots \frac{u_r^{d_r}}{d_r!} \cdot \frac{v_1^{k_1-1} \dots v_r^{k_r-1}}{(k_1-1)!\dots(k_r-1)!}   \cdot q^{u_1 v_1 + \dots + u_r v_r} \in \Q[[q]]\,.
	\end{align*}
	By $k_1+\dots+k_r + d_1+\dots+d_r$ we denote its weight and by $r$ its depth. 
\end{definition}

 Since $q$ will be fixed the whole time we will also write $g^{(d_1,\dots,d_r)}_{k_1,\dots,k_r}$ instead of $g^{(d_1,\dots,d_r)}_{k_1,\dots,k_r}(q)$. For the $\Q$-vector space spanned by all of these $q$-series we write

\[ \G := \big\langle g^{(d_1,\dots,d_r)}_{k_1,\dots,k_r}  \mid  r\ge0, k_1,\dots,k_r \ge 1\,, d_1,\dots,d_r \ge 0 \big\rangle_{\Q} \,, \]
 where we set $g^{(d_1,\dots,d_r)}_{k_1,\dots,k_r}=1$ for $r=0$. In the case $d_1=\dots=d_r=0$ we write\footnote{The series $g_{k_1,\dots,k_r}$ were first studied in \cite{BK}, where the author referred to it as brackets and denoted it by $[k_r,\dots,k_1]$. The space $\G^{(0)}$ was denoted $\mathcal{MD}$ there.} 
 \[g_{k_1,\dots,k_r} := g^{(0,\dots,0)}_{k_1,\dots,k_r} \] 
 and denote the subspace spanned by all of these by
\[ \G^{(0)} := \big\langle g_{k_1,\dots,k_r}  \mid  r\ge0, k_1,\dots,k_r \ge 1 \big\rangle_{\Q} \subset  \G \,. \]

\begin{definition}
We define the $\Q$-linear map $\mg$ from $\g$ to $\G$ on the monomials by
\begin{align*}
\mg : \g &\longrightarrow \G \,,\\
w = e^{(d_1)}_{k_1} \dots e^{(d_r)}_{k_r} &\longmapsto \mg(w) := g^{(d_1,\dots,d_r)}_{k_1,\dots,k_r} \,,
\end{align*}
and set $\mg(1) = 1$.
\end{definition}

\begin{theorem} \label{thm:gdsh} The following statements hold for the map $\mg$.
	\begin{enumerate}[i)]
		\item The map $\mg$ is invariant under $P$, i.e.  for all $w\in \g$ it is
		\[ \mg(P(w))=\mg(w)  \,. \]
		\item $\mg$ is an algebra homomorphism from $\g$ to $\Q[[q]]$ with respect to both products $\boxast$ and $\gsh$, i.e. we have for all $u,v \in \g$
		\[  \mg( u \gsh v) = \mg(u) \cdot \mg(v) = \mg( u \boxast v) \,, \]
		where $\cdot$ denotes the usual multiplication of formal $q$-series in $\Q[[q]]$. In particular the space $\G = \mg(\g) \subset \Q[[q]]$ is an $\Q$-algebra.
	\end{enumerate}
\end{theorem}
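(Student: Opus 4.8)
The plan is to treat the three assertions in the order in which they depend on one another. The key observation is that part ii) for $\gsh$ requires no separate work once part i) and part ii) for $\boxast$ are available: from the definition $u \gsh v = P(P(u) \boxast P(v))$ we get
\begin{equation*}
\mg(u \gsh v) = \mg\big(P(P(u) \boxast P(v))\big) = \mg\big(P(u) \boxast P(v)\big) = \mg(P(u)) \cdot \mg(P(v)) = \mg(u) \cdot \mg(v),
\end{equation*}
where the first equality is the definition of $\gsh$, the second and fourth use part i), and the third is the $\boxast$-homomorphism property. Hence the whole theorem reduces to proving i) and the homomorphism property for $\boxast$; the final assertion that $\G = \mg(\g)$ is a $\Q$-algebra is then immediate, since the image of an algebra homomorphism is closed under multiplication.

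For part i) I would pass to the generating series of monomials. Applying $\mg$ to $\MM{X_1,\dots,X_r}{Y_1,\dots,Y_r}$ and using $\sum_{d \geq 0} \tfrac{u^d}{d!} Y^d = e^{uY}$ together with $\sum_{k \geq 1} \tfrac{v^{k-1}}{(k-1)!} X^{k-1} = e^{vX}$ collapses the double sum defining the $g^{(d_1,\dots,d_r)}_{k_1,\dots,k_r}$ into the closed exponential form
\begin{equation*}
G_r(X;Y) := \mg\big(\MM{X_1,\dots,X_r}{Y_1,\dots,Y_r}\big) = \sum_{\substack{0 < u_1 < \dots < u_r \\ v_1,\dots,v_r \geq 1}} \exp\Big(\sum_{i=1}^r u_i Y_i + \sum_{i=1}^r v_i X_i\Big)\, q^{\sum_i u_i v_i}.
\end{equation*}
I then establish a summation-by-parts symmetry of $G_r$. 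Introducing $w_i = u_i - u_{i-1}$ (with $u_0 = 0$) and $V_i = v_i + \dots + v_r$, which biject $\{0 < u_1 < \dots < u_r\}$ with $\{w_i \geq 1\}$ and $\{v_i \geq 1\}$ with $\{V_1 > \dots > V_r \geq 1\}$, Abel summation turns the three exponents into $\sum_i u_i v_i = \sum_i V_i w_i$, $\sum_i u_i Y_i = \sum_j w_j (Y_j + \dots + Y_r)$ and $\sum_i v_i X_i = \sum_i V_i (X_i - X_{i-1})$. Reversing the order of the now strictly decreasing variables $V_i$, one reads off
\begin{equation*}
G_r(X_1,\dots,X_r; Y_1,\dots,Y_r) = G_r\big(Y_r, Y_{r-1}+Y_r, \dots, Y_1 + \dots + Y_r;\ X_r - X_{r-1}, \dots, X_1\big).
\end{equation*}
By the defining formula for $P$ the right-hand side is exactly $\sum_w \mg(P(w))\, X_1^{k_1-1} \cdots Y_r^{d_r}$, so comparing coefficients with the left-hand side gives $\mg(P(w)) = \mg(w)$ for every monomial $w$, and hence on all of $\g$ by linearity.

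It remains to prove that $\mg$ is a homomorphism for $\boxast$, which is the combinatorial heart of the argument and the place where I expect the real difficulty. Here I would multiply the two nested sums defining $\mg(u)$ and $\mg(v)$ and sort the merged index set according to how the running indices of the first factor interleave with those of the second. The contributions in which no two indices coincide reproduce, as in the classical stuffle product $\ast$, the first two terms of $\boxast$. When an index from the first factor equals one from the second, the two factors $\tfrac{u^{d_1}}{d_1!} \tfrac{u^{d_2}}{d_2!} = \binom{d_1+d_2}{d_1} \tfrac{u^{d_1+d_2}}{(d_1+d_2)!}$ produce the binomial prefactor $\binom{d_1+d_2}{d_1}$, while the two independent inner variables with common weight $q^{u(v_1 + v_2)}$ must be fused through the convolution $\sum_{v_1 + v_2 = v} \tfrac{v_1^{k_1-1}}{(k_1-1)!} \tfrac{v_2^{k_2-1}}{(k_2-1)!}$. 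The main obstacle is to evaluate this inner convolution in closed form and to match it, term by term, with the diagonal letter $e^{(d_1+d_2)}_{k_1+k_2}$ together with the Bernoulli corrections $\sum_{j} \lambda^j_{k_1,k_2} e^{(d_1+d_2)}_{j}$ and $\sum_j \lambda^j_{k_2,k_1} e^{(d_1+d_2)}_{j}$; this is precisely the identity underlying $\boxast$ established in \cite{B} (Theorem 3.6), which I would invoke. Everything outside this convolution is routine bookkeeping of the interleaving patterns.
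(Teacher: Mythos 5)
Your proposal is correct, and it takes a genuinely different route from the paper --- not because the paper argues differently, but because the paper does not argue at all: its proof of Theorem \ref{thm:gdsh} consists of two citations, to Theorem 2.3 of \cite{B} (the partition relation) for i) and to Theorem 3.6 of \cite{B} for ii). You instead supply real arguments for most of the content. Your reduction of the $\gsh$-statement to $P$-invariance plus the $\boxast$-statement, via $\mg(u \gsh v)=\mg(P(P(u)\boxast P(v)))=\mg(P(u)\boxast P(v))=\mg(P(u))\cdot\mg(P(v))=\mg(u)\cdot\mg(v)$, is exactly the right dependency structure and is left implicit in the paper's citation. Your proof of i) is sound: the exponential collapse of the coefficient sums, the substitutions $w_i=u_i-u_{i-1}$ and $V_i=v_i+\cdots+v_r$, the three Abel-summation identities, and the final index reversal together yield precisely the symmetry the paper records as \eqref{eq:partition}, from which $\mg(P(w))=\mg(w)$ follows by comparing coefficients. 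For the $\boxast$-homomorphism you correctly isolate the crux --- matching the diagonal convolution $\sum_{v_1+v_2=v}\frac{v_1^{k_1-1}}{(k_1-1)!}\frac{v_2^{k_2-1}}{(k_2-1)!}$ with the diagonal letter $e^{(d_1+d_2)}_{k_1+k_2}$ plus the Bernoulli corrections $\lambda^j_{a,b}$ --- but you then cite Theorem 3.6 of \cite{B} for it. Two caveats here: that theorem essentially \emph{is} the statement being proven, so quoting it wholesale makes this part a restatement rather than a proof; you should cite (or reprove) the convolution identity used inside its proof instead. Also, the ``routine bookkeeping of the interleaving patterns'' in arbitrary depths is an induction on the recursive definition of $\boxast$ in Hoffman's quasi-shuffle framework and deserves at least a sentence. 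Since the paper itself rests entirely on \cite{B}, your write-up is nevertheless at least as complete as the printed proof; what it buys is a self-contained treatment of i) and of the passage from $\boxast$ to $\gsh$, at the cost of length.
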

\begin{proof}
	The first statement is Theorem 2.3 (Partition relation) in \cite{B}. It has a nice description using the conjugation of partitions, which is the reason for the name of the map $P$. The second statement is Theorem 3.6. in \cite{B}. 
\end{proof}

The statement ii) in Theorem \ref{thm:gdsh} can be seen as double shuffle relations for the $q$-series 	$g^{(d_1,\dots,d_r)}_{k_1,\dots,k_r}$ similar to the double shuffle relations \eqref{eq:dshmzv} of multiple zeta values.
\begin{ex} 
	We have seen before that
	\begin{align*}
		e^{(0)}_{2} \boxast  e^{(0)}_{3} &= e^{(0)}_{2}e^{(0)}_{3} + e^{(0)}_{3}e^{(0)}_{2}+e^{(0)}_{5} - \frac{1}{12} e^{(0)}_{3} \,,\\
	e^{(0)}_{2} \gsh  e^{(0)}_{3} &=e^{(0)}_{3}e^{(0)}_{2}+ 3e^{(0)}_{2}e^{(0)}_{3}+6e^{(0)}_{1}e^{(0)}_{4}  +3e^{(1)}_{4}-3e^{(0)}_{4} 
	\end{align*}  
	and therefore we obtain the relation 
	\begin{align*}
	0 &= \mg(e^{(0)}_{2} \boxast  e^{(0)}_{3}) -\mg(e^{(0)}_{2} \gsh  e^{(0)}_{3}) =g_5 - 2 g_{2,3} - 6 g_{1,4} - 3g^{(1)}_4 + 3g_4 - \frac{1}{12}g_3 \,.
	\end{align*}
\end{ex}

Since $\h^1$ and $\h^0$ have a natural embedding in $\g$ by sending a monomial $e_{k_1} \dots e_{k_r}$ to $e^{(0)}_{k_1} \dots e^{(0)}_{k_r}$ we will view both $\h^1$ and $\h^0$ as subspaces of $\g$ in the following, i.e.
\[\h^0 \subset \h^1 \subset \g \,.\]

 In particular we can view $\mg$ as a map from $\h^1$ (resp. $\h^0$) to $\G$. Clearly the image of $\h^1$ under $\mg$ is exactly the space $\G^{(0)}=\mg(\h^1)$.

\begin{proposition}\label{prop:h1stuffle} The spaces $\h^1$ and $\h^0$ are closed under $\boxast$ and therefore we also have for $u,v \in \h^1$ (resp. $\h^0$) that
			\[ \mg(u) \cdot \mg(v) = \mg( u \boxast v) \,. \]
In particular the space $\G^{(0)}$ is a subalgebra of $\G$.
\end{proposition}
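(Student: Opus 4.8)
The plan is to separate the statement into a combinatorial core—closure of $\h^1$ and $\h^0$ under $\boxast$—and a purely formal consequence. Once closure is known, the identity $\mg(u)\cdot\mg(v)=\mg(u\boxast v)$ for $u,v\in\h^1$ (resp. $\h^0$) is immediate from Theorem \ref{thm:gdsh} ii), which already gives $\mg(u\boxast v)=\mg(u)\mg(v)$ for \emph{all} $u,v\in\g$; the only new input is that $u\boxast v$ now lies back in $\h^1$ (resp. $\h^0$), so that $\mg(u\boxast v)\in\mg(\h^1)=\G^{(0)}$. This simultaneously shows $\G^{(0)}$ is closed under the multiplication of $\Q[[q]]$, i.e. is a subalgebra of $\G$. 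Thus everything reduces to the two closure statements, which I would prove by induction on the total depth, with base case that one factor equals the empty word, using the defining recursion for $\boxast$.

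For $\h^1$ the induction is formal. Under the embedding $\h^1\hookrightarrow\g$ every letter carries upper index $0$, so in the recursion for $e^{(0)}_{k_1}v\boxast e^{(0)}_{k_2}w$ one has $d_1=d_2=0$, whence $\binom{d_1+d_2}{d_1}=\binom{0}{0}=1$ and every letter produced on the right-hand side again has upper index $d_1+d_2=0$. Since the tails and the products $v\boxast w$ appearing there lie in $\h^1$ by the induction hypothesis, each of the four groups of terms stays in the image of $\h^1$, closing the induction.

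The substance is the closure of $\h^0$, where one must track the \emph{last} letter of each monomial. Working inside $\h^1$ (all upper indices $0$) and writing $u=e^{(0)}_{k_1}u'$, $v=e^{(0)}_{k_2}v'$ with $u',v'\in\h^0$ (possibly empty), I would inspect the four groups. The two prepending groups $e^{(0)}_{k_1}(u'\boxast v)$ and $e^{(0)}_{k_2}(u\boxast v')$ stay in $\h^0$ by induction: $u'\boxast v$ and $u\boxast v'$ lie in $\h^0$ and contain no empty-word component (a $\boxast$-product with a nonempty factor is a combination of nonempty words, since $1$ appears only in $1\boxast1$), so prepending a letter changes only the first letter. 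The group $e^{(0)}_{k_1+k_2}(u'\boxast v')$ is harmless because its standalone contribution $e^{(0)}_{k_1+k_2}$ has last letter $k_1+k_2\ge 2$. The only danger is the group of $\lambda$-terms, which can produce a letter $e^{(0)}_1$; this happens precisely in the single-letter case $u=e^{(0)}_{k_1}$, $v=e^{(0)}_{k_2}$ (so $u'=v'=1$ and $k_1,k_2\ge 2$), where the coefficient of the standalone $e^{(0)}_1$ is $\lambda^1_{k_1,k_2}+\lambda^1_{k_2,k_1}$.

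The main obstacle is therefore the vanishing of this coefficient. I would prove $\lambda^1_{a,b}+\lambda^1_{b,a}=0$ for $a,b\ge 2$ directly from the definition: using $\binom{a+b-2}{a-1}=\binom{a+b-2}{b-1}$ the sum factors as
\[
\lambda^1_{a,b}+\lambda^1_{b,a}=\binom{a+b-2}{a-1}\frac{B_{a+b-1}}{(a+b-1)!}\Big((-1)^{a-1}+(-1)^{b-1}\Big).
\]
A parity check finishes it: if $a+b$ is odd the bracket vanishes, while if $a+b$ is even then $a+b-1$ is odd and $\ge 3$, so $B_{a+b-1}=0$. Hence the $e^{(0)}_1$ terms cancel and $u\boxast v\in\h^0$, completing the induction and, with the first paragraph, the proposition.
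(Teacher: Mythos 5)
Your proof is correct, and it is genuinely more complete than the paper's own, which consists of the single sentence that the proposition ``follows directly from the definition of the product $\boxast$, since it does not increase the indexes $d_j$.'' That observation is exactly your $\h^1$ argument (all upper indices remain $0$, and Theorem \ref{thm:gdsh} ii) then yields the homomorphism identity and the subalgebra claim, as in your first paragraph), so on this part you and the paper coincide. Where you genuinely diverge is $\h^0$: the paper's one-liner says nothing about the last-letter condition, and this is not a vacuous issue, since the $\lambda$-terms in the recursion can a priori create a standalone letter $e^{(0)}_{1}$, precisely when both factors are single letters $e^{(0)}_{k_1}$, $e^{(0)}_{k_2}$ with $k_1,k_2\ge 2$. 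Your cancellation $\lambda^1_{k_1,k_2}+\lambda^1_{k_2,k_1}=0$ --- obtained from the symmetry $\binom{k_1+k_2-2}{k_1-1}=\binom{k_1+k_2-2}{k_2-1}$ plus the parity split (the signs $(-1)^{k_1-1}+(-1)^{k_2-1}$ cancel when $k_1+k_2$ is odd, and $B_{k_1+k_2-1}=0$ when $k_1+k_2$ is even because $k_1+k_2-1\ge 3$ is then odd) --- is exactly the ingredient the paper leaves implicit; it is consistent with the paper's example $e^{(0)}_{2}\boxast e^{(0)}_{3}$, in which the only surviving correction term is $-\tfrac{1}{12}e^{(0)}_{3}$. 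In short: identical formal reduction (closure plus Theorem \ref{thm:gdsh} ii)), but your proof supplies the one nontrivial verification, the closure of $\h^0$, that the paper's proof omits, and it is worth having that argument on record.
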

\begin{proof} This follows directly from the definition of the product $\boxast$, since it does not increase the indexes $d_j$.
\end{proof}

Notice that the analogue statement of Proposition \ref{prop:h1stuffle} for the product $\gsh$ is false, since by Example \ref{ex:shuffleg} we have  $e_2 \gsh e_3 \notin \h^1$.

\begin{remark} Even though it is not the purpose of this paper we give a remark on why the series $g$ can be considered as a $q$-analogue of multiple zeta values. This was discussed in $\cite{BK}$, where the authors introduced the following map. Define for $k\in \N$ the map $\Q[[q]] \rightarrow \R \cup \{ \infty \}$ by $Z_k(f) = \lim_{q \to 1} (1-q)^{k}f(q)$. One can show (\cite{BK}, Proposition 6.4.) that for  $k_1,\dots,k_{r-1} \geq 1, k_r \geq 2$ and $k=k_1+\dots+k_r$ it is
\[ Z_k(g_{k_1,\dots,k_r}) = \zeta(k_1,\dots,k_r) \,. \]	
In this note we will not focus on this aspect in more detail.
\end{remark}

We end this section by discussing the generating series of our $q$-series $g^{(d_1,\dots,d_r)}_{k_1,\dots,k_r}$, since we will need them in the remaining sections. By Theorem  2.3 in \cite{B} we have the following explicit expression
	\begin{align}\label{eq:genseriesg}\begin{split}
\GG{X_1,\dots,X_r}{Y_1,\dots,Y_r} &:= \sum_{\substack{k_1,\dots,k_r \geq 1\\d_1,\dots,d_r \geq 0}} g^{(d_1,\dots,d_r)}_{k_1,\dots,k_r} X_1^{k_1-1} \dots X_r^{k_r-1}  Y_1^{d_1} \dots Y_r^{d_r}  \\
&= \sum_{0 < n_1 < \dots < n_r} e^{n_1 Y_1}\frac{e^{X_1} q^{n_1}}{1-e^{X_1} q^{n_1}} \dots  e^{n_r Y_r} \frac{e^{X_r} q^{n_r}}{1-e^{X_r} q^{n_r}} \,. 
	\end{split}
	\end{align}
		
		Notice that with this the invariance of the map $\mg$ under the involution $P$ (Theorem \ref{thm:gdsh} i) ) can be stated as 
\begin{equation}\label{eq:partition}
\GG{X_1,\dots,X_r}{Y_1,\dots,Y_r} = \GG{Y_r,Y_{r-1}+Y_r,\dots,Y_1+\dots+Y_r}{X_r-X_{r-1}, X_{r-1}-X_{r-2},\dots,X_1} \,.
\end{equation}	
		
For the generating series of the $q$-series $g_{k_1,\dots,k_r} = g^{(0,\dots,0)}_{k_1,\dots,k_r}$ we will write
	\begin{align}\label{eq:genserg2}\begin{split}
	\geng(X_1,\dots,X_r) &:= \GG{X_1,\dots,X_r}{0,\dots,0}=\sum_{k_1,\dots,k_r \geq 1} g_{k_1,\dots,k_r} X_1^{k_1-1} \dots X_r^{k_r-1} \\
	&= \sum_{0 < n_1 < \dots < n_r} \frac{e^{X_1} q^{n_1}}{1-e^{X_1} q^{n_1}} \dots  \frac{e^{X_r} q^{n_r}}{1-e^{X_r} q^{n_r}} \,. 
	\end{split}
	\end{align}

\subsection{The series $ g^\sh_{k_1,\dots,k_r}$ and the map $\mg^\sh$}

Following \cite{BT} we define for $n_1,\dots,n_r \geq 1$ the following series
\[ \HH{n_1,\dots,n_r}{X_1,\dots,X_r} = \sum_{0 < d_1 < \dots < d_r} e^{d_1 X_1} \left( \frac{q^{d_1}}{1-q^{d_1}} \right)^{n_1} \dots  e^{d_r X_r} \left( \frac{q^{d_r}}{1-q^{d_r}} \right)^{n_r} \,.  \] 
Observe that by \eqref{eq:genseriesg}, \eqref{eq:partition} and \eqref{eq:genserg2} we have
\begin{equation}\label{eq:TandH}	\geng(X_1,\dots,X_r) = \HH{1,\dots,1}{X_r-X_{r-1}, X_{r-1}-X_{r-2},\dots,X_1}\,. \end{equation}	

%
\begin{definition} \begin{enumerate}[i)]
\item For $k_1,\dots,k_r \geq 1$ define the $q$-series $g^\sh_{k_1,\dots,k_r}(q) \in \Q[[q]]$ as the coefficients of the following generating function:
	\begin{align*}
\geng_\sh(X_1,\dots&,X_r) =\sum_{k_1,\dots,k_r \geq 1} g^\sh_{k_1,\dots,k_r}(q) X_1^{k_1-1} \dots X_r^{k_r-1}  \\
&:=  \sum_{m=1}^{r} \sum_{\substack{i_1+\dots+i_m = r\\i_1,\dots,i_m \geq 1}} \frac{1}{i_1! \dots i_m!} \HH{i_1,i_2, \dots,i_m}{X_r-X_{r-i_1},X_{r-i_1}-X_{r-i_1-i_2}, \dots, X_{i_m}}\,. 
	\end{align*}
Again we also write $g^\sh_{k_1,\dots,k_r}$ instead of $g^\sh_{k_1,\dots,k_r}(q)$.	
\item Define the $\Q$-linear map $\mg^\sh $ from $\h^1$ to $\Q[[q]]$ on the monomials by
\begin{align*}
\mg^\sh : \h^1 &\longrightarrow \Q[[q]] \,,\\
w = e_{k_1} \dots e_{k_r} &\longmapsto \mg^\sh(w) := g^{\sh}_{k_1,\dots,k_r} \,,
\end{align*}
set $\mg^\sh(1) = 1$ and extend it linearly to $\h^1$.
\end{enumerate}
\end{definition}

\begin{theorem}\label{thm:gshfacts}
	\begin{enumerate}[i)]
		\item For all $k_1,\dots,k_r \geq 1$ we have $g^{\sh}_{k_1,\dots,k_r} \in \G$.
		\item In the cases $k_1,\dots,k_{r_1} \geq 2$, $k_r \geq1$ it is  $g^{\sh}_{k_1,\dots,k_r} = g_{k_1,\dots,k_r} \in \G^{(0)}$.
		\item The map $\mg^\sh$ is an algebra homomorphism from $\h_\sh^1$ to $\G$.
	\end{enumerate}
\end{theorem}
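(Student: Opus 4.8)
The plan is to prove all three statements at the level of the generating series $\geng_\sh$, reducing everything to one expansion of the building blocks $\HH{n_1,\dots,n_m}{\cdots}$. The computational input I would record first is
\[ \left(\frac{q^{d}}{1-q^{d}}\right)^{n} = \sum_{v\geq 1}\binom{v-1}{n-1}q^{dv}, \]
where $\binom{v-1}{n-1}$ is a polynomial in $v$ of degree $n-1$. Substituting this into the definition of $\HH{n_1,\dots,n_m}{Z_1,\dots,Z_m}$, expanding each $e^{d_\ell Z_\ell}$, and writing $\prod_\ell\binom{v_\ell-1}{n_\ell-1}$ as a polynomial in the $v_\ell$, the coefficient of $Z_1^{k_1-1}\dots Z_m^{k_m-1}$ becomes a finite $\Q$-linear combination of the double-indexed series $g^{(k_1-1,\dots,k_m-1)}_{j_1+1,\dots,j_m+1}$ with $0\le j_\ell\le n_\ell-1$, hence lies in $\G$. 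Since in the definition of $\geng_\sh$ the arguments of the $H$-functions are $\Q$-linear (difference) substitutions of the $X_i$, extracting the coefficient of $X_1^{k_1-1}\dots X_r^{k_r-1}$ only produces a further $\Q$-linear combination of such $\G$-elements. This gives i).

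For ii) I would track which variables actually occur. In the term indexed by a composition $r=i_1+\dots+i_m$ the function $\HH{i_1,\dots,i_m}{X_r-X_{r-i_1},\dots,X_{i_m}}$ involves only the $m$ variables $X_r,X_{r-i_1},\dots,X_{i_m}$, so among $X_1,\dots,X_{r-1}$ exactly $m-1$ appear. Hence for $m<r$ at least one of $X_1,\dots,X_{r-1}$ is absent, and the term contributes $0$ to every monomial $X_1^{k_1-1}\dots X_r^{k_r-1}$ with $k_1,\dots,k_{r-1}\ge 2$. Only the composition $(1,\dots,1)$ (the case $m=r$, with coefficient $1$) reaches these monomials, and by \eqref{eq:TandH} that term equals $\geng(X_1,\dots,X_r)$; reading off the coefficient yields $g^\sh_{k_1,\dots,k_r}=g_{k_1,\dots,k_r}\in\G^{(0)}$.

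Statement iii) is the heart of the matter, and I would prove it by exhibiting $\geng_\sh$ as the image of a shuffle word under the exponential isomorphism of \cite{H1}. The first step is to compute a product of two building blocks: since $\HH{\mathbf n}{\mathbf Z}$ and $\HH{\mathbf m}{\mathbf W}$ are nested sums over $0<d_1<\dots$ and $0<e_1<\dots$, their product decomposes over all relative orderings of the two index sets, and whenever an index of the first set coincides with one of the second the local factors combine via $(\tfrac{q^d}{1-q^d})^{n}(\tfrac{q^d}{1-q^d})^{m}=(\tfrac{q^d}{1-q^d})^{n+m}$ and $e^{dZ}e^{dW}=e^{d(Z+W)}$. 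Thus ordinary multiplication of $H$-series realises the quasi-shuffle product on the alphabet of symbols $\binom{Z}{n}$ with the commutative, associative merge $\binom{Z}{n}\diamond\binom{W}{m}=\binom{Z+W}{n+m}$. The second step is to recognise the factorial-weighted composition sum $\sum_m\sum_{i_1+\dots+i_m=r}\tfrac{1}{i_1!\dots i_m!}$ defining $\geng_\sh$ as the map $\exp$ of \cite{H1}, which is an algebra isomorphism from the shuffle algebra to this quasi-shuffle algebra. Because $\exp(u\shh v)=\exp(u)\diamond\exp(v)$ and the $\diamond$-product of $H$-series is realised by ordinary multiplication in $\Q[[q]]$, the product $\mg^\sh(u)\cdot\mg^\sh(v)$ equals the image of $u\shh v$, that is $\mg^\sh(u\shh v)$.

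The main obstacle is the bookkeeping in this last step: one must verify that the coincidence (diagonal) terms in the $H$-product occur with exactly the right multiplicities, and that the difference-coordinate substitution used in the arguments of the $H$-functions translates the abstract shuffle on the symbols $\binom{Z}{n}$ into the honest shuffle $\shh$ on the letters $e_{k_i}$. This compatibility, together with the fact that $\exp$ is an isomorphism, is precisely what forces $\mg^\sh$ to respect $\shh$; alternatively, the underlying generating-function identity may be quoted directly from \cite{BT}, whence i) completes the claim that its values land in $\G$.
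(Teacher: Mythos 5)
Your proposal is correct, and for parts ii) and iii) it follows essentially the route the paper takes (which is itself largely a citation of \cite{B} and \cite{BT} plus a sketch). For ii) you make the same variable-counting observation as the paper: only the composition $(1,\dots,1)$ involves all of $X_1,\dots,X_{r-1}$, every other term misses some $X_i$ and so cannot contribute when $k_1,\dots,k_{r-1}\geq 2$, and by \eqref{eq:TandH} the surviving term is $\geng(X_1,\dots,X_r)$. For iii) your identification of the $H$-product as a quasi-shuffle (merging coincident summation indices via $\bigl(\tfrac{q^d}{1-q^d}\bigr)^{n}\bigl(\tfrac{q^d}{1-q^d}\bigr)^{m}=\bigl(\tfrac{q^d}{1-q^d}\bigr)^{n+m}$ and $e^{dZ}e^{dW}=e^{d(Z+W)}$) together with Hoffman's exponential isomorphism is exactly the mechanism the paper alludes to with ``results of Hoffman on quasi-shuffle products'' and the sample functional equation \eqref{eq:tshfcteq}; the telescoping of the difference coordinates is what makes the merged letters come out right, and the remaining bookkeeping you flag (that these substitution identities encode the honest $e_0,e_1$-shuffle $\shh$ on $\h^1$) is precisely what the paper defers to \cite{BT}, so your level of detail matches the source. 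The one place you genuinely diverge is i): the paper does not expand binomially, but instead uses the differential identity $\bigl(\tfrac{e^Xq^n}{1-e^Xq^n}\bigr)^2=\tfrac{d}{dX}\tfrac{e^Xq^n}{1-e^Xq^n}-\tfrac{e^Xq^n}{1-e^Xq^n}$ to rewrite the higher powers appearing in $H$ as derivatives of $\geng$, whose coefficients are in $\G$ by definition. Your alternative via $\bigl(\tfrac{q^{d}}{1-q^{d}}\bigr)^{n}=\sum_{v\geq 1}\binom{v-1}{n-1}q^{dv}$ and the polynomiality of $\binom{v-1}{n-1}$ in $v$ is equally valid and arguably more informative: it exhibits the coefficients of $\HH{n_1,\dots,n_m}{Z_1,\dots,Z_m}$ as explicit $\Q$-linear combinations of the double-indexed series $g^{(k_1-1,\dots,k_m-1)}_{j_1+1,\dots,j_m+1}$ with $0\leq j_\ell\leq n_\ell-1$, which is exactly the type of explicit expression recorded in Proposition \ref{prop:gshing} and used in the later lemmas, whereas the paper's derivative trick is shorter and ties in with the $\dif$-operator theme but establishes membership in $\G$ less explicitly.
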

\begin{proof} This is Proposition 5.5 together with Theorem 5.7 in \cite{B}, where the series $g^{\sh}_{k_1,\dots,k_r}$ is denoted $[k_r,\dots,k_1]^\sh$. Statement iii) was originally proven in \cite{BT}, where also a slightly weaker version of ii) can be found. Since we will need some parts of the proof we will recall the basic ideas:
	\begin{enumerate}[i)]
		\item To show that $g^{\sh}_{k_1,\dots,k_r} \in \G$ it is sufficient to prove that the coefficients of the series $H$ are elements in $\G$. This can be done by observing that $\left( \frac{e^X q^n}{1-e^X q^n} \right)^2= \frac{d}{dX}\frac{e^X q^n}{1-e^X q^n} - \frac{e^X q^n}{1-e^X q^n}$. Inductively this enables one to write the terms $\left( \frac{e^X q^n}{1-e^X q^n} \right)^n$, appearing in the definition of $H$, as derivatives of $\frac{e^X q^n}{1-e^X q^n}$, i.e. to write $H$ in terms of derivatives of $\geng$. Since the coefficients of $\geng$ are by definition in $\G$ the statement follows.
		\item To show that $g^{\sh}_{k_1,\dots,k_r} = g_{k_1,\dots,k_r} \in \G^{(0)}$ in the cases $k_1,\dots,k_{r_1} \geq 2$, $k_r \geq1$, one observes that there is just one summand in the definition of $g^{\sh}_{k_1,\dots,k_r}$, namely the case $i_1=\dots=i_m=1$, where all variables $X_1,\dots,X_{r-1}$ appear.  By \eqref{eq:TandH} this gives exactly $ g_{k_1,\dots,k_r} $.
		\item The statement that $\mg^\sh$ is an algebra homomorphism is equivalent to prove certain functional equations for the series $\geng_\sh$. This can be done by using results of Hoffman on quasi-shuffle product. In the lowest depth case this functional equation reads 
		\begin{equation}\label{eq:tshfcteq}
		\geng_\sh(X) \cdot 	\geng_\sh(Y) = 	\geng_\sh(X,X+Y) + 	\geng_\sh(Y,X+Y)\,,
		\end{equation}	
		which we will use later in the proof of Theorem \ref{thm:len1}.
	\end{enumerate}
\end{proof}
Due to the proof of Theorem \ref{thm:gshfacts} i) , writing $g^\sh$ as elements in $\G$ can be done explicitly:
\begin{proposition} \label{prop:gshing}
	\begin{enumerate}[i)]
		\item In depth two it is
		\[ g^{\sh}_{k_1,k_2}  = g_{k_1,k_2} + \delta_{k_1,1} \cdot \frac{1}{2} \left( g^{(1)}_{k_2} -  g_{k_2} \right) \, \]
		\item And in depth three it is
		\begin{align*}
		g^{\sh}_{k_1,k_2,k_3} &=  g_{k_1,k_2,k_3} + \delta_{k_1,1} \cdot \frac{1}{2} \left(   g^{(1,0)}_{k_2,k_3} -   g_{k_2,k_3}  \right)  +\delta_{k_2,1} \cdot \frac{1}{2} \left( g^{(0,1)}_{k_1,k_3} - g^{(1,0)}_{k_1,k_3} -  g_{k_1,k_3}   \right)  \\
		&+\delta_{k_1\cdot k_2,1}\cdot \left( \frac{1}{6} g^{(2)}_{k_3} - \frac{1}{4}g^{(1)}_{k_3}  +  \frac{1}{6} g_{k_3}   \right) \,.
		\end{align*}
	\end{enumerate}
	Here $\delta_{a,b}$ denotes the Kronecker delta which is $1$ in the case $a=b$ and $0$ otherwise.
\end{proposition}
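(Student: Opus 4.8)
The plan is to compute the generating series $\geng_\sh$ directly from its definition, reduce every summand to the double-indexed generating series $\GG{}{}$, and read off the coefficient of $X_1^{k_1-1}\dots X_r^{k_r-1}$. Two tools suffice: the identity \eqref{eq:TandH}, which says that the term attached to the all-ones composition, $\HH{1,\dots,1}{\dots}$, is exactly the generating series $\geng$ of the plain $g_{k_1,\dots,k_r}$; and the derivative relation
\[ \left( \frac{e^X q^n}{1-e^X q^n}\right)^2 = \frac{d}{dX}\frac{e^X q^n}{1-e^X q^n} - \frac{e^X q^n}{1-e^X q^n} \]
from the proof of Theorem \ref{thm:gshfacts} i), which rewrites the higher powers appearing in $\HH{n_1,\dots}{\dots}$ as $X$-derivatives of first powers and hence in terms of $\GG{}{}$. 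A last step uses the partition relation $g^{(d)}_k = g^{(k-1)}_{d+1}$, the depth-one case of Theorem \ref{thm:gdsh} i).

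First I would treat depth two as a model. The definition gives $\geng_\sh(X_1,X_2) = \HH{1,1}{X_2-X_1,X_1} + \tfrac12\HH{2}{X_2}$; by \eqref{eq:TandH} the first term is $\geng(X_1,X_2) = \sum g_{k_1,k_2} X_1^{k_1-1}X_2^{k_2-1}$. Applying the derivative identity to the square turns the second term into $\frac{d}{dW}\big|_{W=0}\GG{W}{X_2} - \GG{0}{X_2} = \sum_{d\geq0}\big(g^{(d)}_2 - g^{(d)}_1\big)X_2^d$. As this depends on $X_2$ alone it only feeds the coefficient of $X_1^{k_1-1}X_2^{k_2-1}$ when $k_1=1$, producing $\delta_{k_1,1}$; setting $d=k_2-1$ and using $g^{(k_2-1)}_2 = g^{(1)}_{k_2}$, $g^{(k_2-1)}_1 = g_{k_2}$ yields exactly the claimed depth-two formula.

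For depth three the same expansion sorts the compositions of $3$ into four pieces,
\[ \geng_\sh(X_1,X_2,X_3) = \geng(X_1,X_2,X_3) + \tfrac12\,\HH{2,1}{X_3-X_1,X_1} + \tfrac12\,\HH{1,2}{X_3-X_2,X_2} + \tfrac16\,\HH{3}{X_3}\,. \]
I would expand each correction into $\GG{}{}$-series by applying the derivative identity once to the squared slot of $\HH{2,1}$ and $\HH{1,2}$ (e.g.\ $\HH{2,1}{Z_1,Z_2} = \frac{d}{dW}\big|_{W=0}\GG{W,0}{Z_1,Z_2} - \GG{0,0}{Z_1,Z_2}$) and twice to $\HH{3}$ via $P^3 = \tfrac12(P''-3P'+2P)$ for $P = \frac{e^Xq^n}{1-e^Xq^n}$. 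The decisive bookkeeping is which variables survive: $(X_3-X_1,X_1)$ omits $X_2$, $(X_3-X_2,X_2)$ omits $X_1$, and $\HH{3}{X_3}$ omits both, so coefficient extraction forces $\delta_{k_2,1}$, $\delta_{k_1,1}$ and $\delta_{k_1k_2,1}$ respectively.

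The main obstacle will be the final matching. Unlike depth two, the corrections now carry shifted arguments such as $X_3-X_1$, so extracting the coefficient of $X_1^{k_1-1}X_2^{k_2-1}X_3^{k_3-1}$ involves binomial expansions of these shifts together with the partition relation $g^{(d)}_k = g^{(k-1)}_{d+1}$ (and, in generating-function form, \eqref{eq:partition}) to recognise the results as $g^{(1,0)}_{k_2,k_3}$, $g^{(0,1)}_{k_1,k_3}$, $g^{(2)}_{k_3}$, and so on. The genuinely delicate point is the overlap $k_1=k_2=1$, where all three corrections contribute at once and must collapse to $\tfrac16 g^{(2)}_{k_3} - \tfrac14 g^{(1)}_{k_3} + \tfrac16 g_{k_3}$; this is where the numerical coefficients and the risk of an arithmetic slip concentrate. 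Everything else reduces to routine coefficient comparison.
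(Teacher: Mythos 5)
Your argument is correct, but it is not the route the paper takes: the paper disposes of Proposition \ref{prop:gshing} by citation alone (``This is i) and ii) of Corollary 5.8 in \cite{B}''), whereas you give a self-contained derivation from the definition of $\geng_\sh$, using precisely the two tools the paper only sketches in its proof of Theorem \ref{thm:gshfacts} i): the identification \eqref{eq:TandH} of the all-ones composition with $\geng$, and the rewriting of higher powers of $\frac{e^Xq^n}{1-e^Xq^n}$ as derivatives evaluated at an auxiliary variable. Your depth-two computation is right, the composition bookkeeping in depth three (which variables each piece omits, hence which Kronecker deltas appear) is right, and the coefficients $\tfrac12,\tfrac12,\tfrac16$ of the pieces attached to the compositions $(2,1)$, $(1,2)$, $(3)$, combined with your identity $P^3=\tfrac12(P''-3P'+2P)$, do produce the stated rational numbers. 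One point where you overestimate the difficulty: the overlap $k_1=k_2=1$ requires no collapsing or cancellation, because the proposition's three $\delta$-terms correspond one-to-one to the three correction pieces. Concretely, the partition relation \eqref{eq:partition} gives $\GG{W_1,W_2}{X_3-X_1,X_1}=\GG{X_1,X_3}{W_2-W_1,W_1}$, whence for \emph{all} $k_1\geq 1$ (including $k_1=1$)
\begin{equation*}
\frac12\,\HH{2,1}{X_3-X_1,X_1}=\frac12\sum_{k_1,k_3\geq1}\left(g^{(0,1)}_{k_1,k_3}-g^{(1,0)}_{k_1,k_3}-g_{k_1,k_3}\right)X_1^{k_1-1}X_3^{k_3-1}\,,
\end{equation*}
which is exactly the $\delta_{k_2,1}$-term of the proposition; likewise $\frac12\HH{1,2}{X_3-X_2,X_2}$ is exactly the $\delta_{k_1,1}$-term and $\frac16\HH{3}{X_3}$ is exactly the $\delta_{k_1\cdot k_2,1}$-term, so the binomial expansions you anticipate are absorbed entirely by the partition relation and no interaction between the pieces ever occurs. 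As for what each approach buys: the paper's citation keeps the exposition minimal but leaves the formulas unverifiable without consulting \cite{B}; your derivation verifies them within the paper's own notation and makes transparent where each rational coefficient comes from, at the cost of a page of generating-function manipulation.
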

\begin{proof}
This is i) and ii) of Corollary 5.8 in \cite{B}. 
\end{proof}

\section{Derivatives}

In this section we will discuss the behavior of the above introduced $q$-series under the operator $\dif = q \frac{d}{dq}$. Since this operator acts on a $q$-series by $\dif \sum_{n\geq 0} a_n q^n = \sum_{n>0} n a_n q^n$ it is easy to see that by definition we have
	\begin{equation} \label{eq:difong}  \dif g^{(d_1,\dots,d_r)}_{k_1,\dots,k_r}  = \sum_{j=1}^{r} (d_j+1) \cdot k_j \cdot g^{(d_1,\dots,d_j+1,\dots,d_r)}_{k_1,\dots,k_j+1,\dots,k_r} \,. 
		\end{equation}
In particular it follows that the space $\G$ is closed under $\dif$.

\subsection{Derivatives of $g$ and $g^\sh$}\label{sec:difg}
 In \cite{BK} it was proven, that also the subspace $\G^{(0)}$ is closed under the operator $\dif$ (Theorem 1.7 \cite{BK}). This is not obvious at all, since by \eqref{eq:difong}   we have for example
 	\[ \dif  g_{k_1,k_2} = \dif  g^{(0,0)}_{k_1,k_2} = k_1 g^{(1,0)}_{k_1+1,k_2} + k_2 g^{(0,1)}_{k_1,k_2+1} \,,\]
 	and a priori $g^{(1,0)}_{k_1+1,k_2}$ and $g^{(0,1)}_{k_1,k_2+1} $  are not elements in $\G^{(0)}$. 
In \cite{BK} the authors also give explicit formulas for $\dif g_k$ and $\dif g_{k_1,k_2}$. Numerical experiments suggest, that also the space spanned by all $g^\sh$ is closed under $\dif$, but so far there are no known results on this.  We now give the first result on this observation by the following explicit formula for $\dif g_k^\sh$. 
\begin{theorem}(Theorem \ref{thm:intro} i)) \label{thm:len1} For $k\geq 1$ and $\dif = q \frac{d}{dq}$ we have
	\begin{equation} \label{eq:thm1} 
	\frac{1}{k} \dif g^\sh_k = (k+1) g^\sh_{k+2} - \sum_{n=2}^{k+1} (2^n - 2) g^\sh_{k+2-n,n} \,.
	\end{equation}
\end{theorem}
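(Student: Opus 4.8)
The plan is to prove the statement at the level of the generating series $\geng_\sh(X)=\sum_{k\ge1}g^\sh_k X^{k-1}$ and then read off the coefficient of $X^k$. First I would differentiate directly: since $\geng_\sh(X)=\HH{1}{X}=\sum_{d\ge1}e^{dX}\tfrac{q^d}{1-q^d}$, and $\dif\tfrac{q^d}{1-q^d}=d\big(\tfrac{q^d}{1-q^d}+(\tfrac{q^d}{1-q^d})^2\big)$ while $d\,e^{dX}=\tfrac{d}{dX}e^{dX}$, one obtains
\[ \dif\geng_\sh(X)=\frac{d}{dX}\Big(\geng_\sh(X)+\HH{2}{X}\Big). \]
Comparing the coefficient of $X^{k-1}$ gives $\tfrac1k\dif g^\sh_k=[X^k]\big(\geng_\sh(X)+\HH{2}{X}\big)$, so the whole theorem reduces to the single power series identity
\[ \geng_\sh(X)+\HH{2}{X}=\geng_\sh'(X)-\geng_\sh(X)^2+2\,\geng_\sh(X,X); \]
we refer to this as $(\heartsuit)$. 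Taking $[X^k]$ of $(\heartsuit)$ will produce the claimed right-hand side.

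The main work is to establish $(\heartsuit)$, and the key device is to pass to $z=e^X$ and set $w_v=\tfrac{zq^v}{1-zq^v}$. Resumming the geometric series in the summation index turns everything into clean single sums: $\geng_\sh(X)=\sum_{v\ge1}w_v$, and, using $(\tfrac{q^d}{1-q^d})^2=\sum_{v\ge1}(v-1)q^{dv}$, also $\HH{2}{X}=\sum_{v\ge1}(v-1)w_v$, so that $\geng_\sh(X)+\HH{2}{X}=\sum_{v\ge1}v\,w_v$. On the right, $\geng_\sh'(X)=z\tfrac{d}{dz}\sum_v w_v=\sum_{v\ge1}(w_v+w_v^2)$. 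For the depth-two term I would expand the definition of $\geng_\sh$ to get $\geng_\sh(X,X)=\tfrac12\HH{2}{X}+\HH{1,1}{0,X}$, and then invoke the partition relation \eqref{eq:TandH} in the crucial form $\HH{1,1}{0,X}=\geng(X,X)=\sum_{0<v_1<v_2}w_{v_1}w_{v_2}$. Substituting, the quadratic terms telescope, $-\geng_\sh(X)^2+2\geng_\sh(X,X)=\sum_v(v-1)w_v-\sum_v w_v^2$, so the right-hand side of $(\heartsuit)$ becomes $\sum_v(w_v+w_v^2)+\sum_v(v-1)w_v-\sum_v w_v^2=\sum_{v\ge1}v\,w_v$, matching the left-hand side exactly.

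Finally I would use the functional equation \eqref{eq:tshfcteq} to package the right-hand side of $(\heartsuit)$ into the stated $q$-series. Setting $Y=X$ there gives $\geng_\sh(X)^2=2\geng_\sh(X,2X)$, hence $-\geng_\sh(X)^2+2\geng_\sh(X,X)=2\geng_\sh(X,X)-2\geng_\sh(X,2X)$; since the doubled second variable contributes a factor $2^{b-1}$ to $g^\sh_{a,b}$, the coefficient of $X^k$ of this difference is $-\sum_{n=2}^{k+1}(2^n-2)g^\sh_{k+2-n,n}$, while $[X^k]\geng_\sh'(X)=(k+1)g^\sh_{k+2}$. Together with the first step this is exactly \eqref{eq:thm1}. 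I expect the main obstacle to be not a single hard estimate but getting the two genuinely non-formal inputs to cooperate: the $q$-derivative rule for $\tfrac{q^d}{1-q^d}$ in the first step, and especially the partition relation \eqref{eq:TandH} identifying $\HH{1,1}{0,X}$ with $\geng(X,X)$ — without it the depth-two contribution cannot be reconciled with the single sum $\sum_{v}(v-1)w_v$, and the coefficients $2^n-2$ fail to appear.
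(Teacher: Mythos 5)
Your proof is correct and follows essentially the same route as the paper: the same first step $\dif\geng_\sh(X)=\frac{d}{dX}\bigl(\geng_\sh(X)+\HH{2}{X}\bigr)$, the same central generating-series identity (your $(\heartsuit)$ is exactly the paper's \eqref{eq:maineq} after substituting $\geng_\sh(X)^2=2\geng_\sh(X,2X)$), and the same three key inputs, namely the square identity for $\frac{e^Xq^n}{1-e^Xq^n}$, the partition relation giving $\HH{1,1}{0,X}=\geng(X,X)$, and the shuffle functional equation \eqref{eq:tshfcteq}. The only difference is organizational: you verify the central identity by resumming everything into explicit sums of $w_v=\frac{e^Xq^v}{1-e^Xq^v}$, whereas the paper reaches the same cancellation by formally combining its displayed identities \eqref{eq:gsquare1}, \eqref{eq:gsquare2} and \eqref{eq:gshXX}.
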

\begin{proof}

To prove \eqref{eq:thm1} we will construct the generating functions of both sides and then show that they are equal. First notice that
\begin{align}\label{eq13}
\begin{split}
\dif \geng_\sh(Y) = q \frac{d}{dq} \geng_\sh(Y) &=  q \frac{d}{dq} \HH{1}{Y} = \sum_{0<d} e^{dY} q \frac{d}{dq} \left(\frac{q^d}{1- q^d}  \right)  \\
&= \sum_{0<d} d e^{dY} \left(  \left(\frac{q^d}{1-q^d}\right)^2+\frac{q^d}{1-q^d} \right)= \frac{d}{dY}\left( \HH{2}{Y}+\HH{1}{Y} \right) \,.\end{split}
\end{align}
Applying $\int_0^X \dots dY$ to both sides of \eqref{eq13} and using $\HH{2}{0}+\HH{1}{0}=g_2 $ we obtain
\begin{align*}
g_2 + \sum_{k>0} \frac{1}{k} \dif g_k X^k
= \HH{1}{X} + \HH{2}{X} = \geng(X) + \HH{2}{X} \,.
\end{align*}
This is the generating series of the left-hand side of \eqref{eq:thm1}, where we also included the term $g_2$ in the case $k=0$. This will also be included in the generating function of $(k+1)g^\sh_{k+2}$ for which we get 
\[ \sum_{k\geq0} (k+1) g^\sh_{k+2} X^k = \sum_{k>1} (k-1) g^\sh_k X^{k-2} = \frac{d}{dX} \sum_{k>0} g^\sh_k X^{k-1} = \frac{d}{dX} \geng_\sh(X)= \frac{d}{dX} \geng(X) \,.\] 
The generating function of the second part on the right-hand side of \eqref{eq:thm1} is given by 
\[ \sum_{k>0} \left( \sum_{n=2}^{k-1} (2^n-2) g^\sh_{k+2-n,n}\right) X^k = 2 \geng_\sh(X,2X) - 2 \geng_\sh(X,X) \,.\]

We therefore need to show
	\begin{equation}\label{eq:maineq} \geng(X) + \HH{2}{X} \overset{!}{=} \frac{d}{dX} \geng(X) - 2 \geng_\sh(X,2X) + 2 \geng_\sh(X,X) \,.	
	\end{equation}

	Using the shuffle product formula \eqref{eq:tshfcteq} for $\geng_\sh$, we obtain
	\begin{equation}\label{eq:gsquare1}
	\geng(X)^2 =\geng_\sh(X)^2 = \geng_\sh(X,X+X) + \geng_\sh(X,X+X) = 2 \geng_\sh(X,2X) \,.
	\end{equation}
	Using $\left( \frac{e^X q^n}{1-e^X q^n} \right)^2= \frac{d}{dX}\frac{e^X q^n}{1-e^X q^n} - \frac{e^X q^n}{1-e^X q^n}$ we also derive
	\begin{equation}\label{eq:gsquare2}
	\geng(X)^2 = 2 \geng(X,X) + \frac{d}{dX} \geng(X) - \geng(X)\,.
	\end{equation}
	Combining \eqref{eq:gsquare1} and \eqref{eq:gsquare2}  we obtain
	\begin{equation}\label{eq:gshX2X}
	2 \geng_\sh(X,2X) = 2 \geng(X,X) + \frac{d}{dX} \geng(X) - \geng(X) \,.
	\end{equation}
	By definition of $\geng_\sh$ we have
	\begin{equation}\label{eq:gshXX}
	2 \geng_\sh(X,X) = 2 \geng(X,X) + \HH{2}{X} \,.
	\end{equation}
	Equation \eqref{eq:maineq} now follows by combining \eqref{eq:gshX2X} and \eqref{eq:gshXX}\,.
\end{proof}

\begin{remark}
	Multiplying both sides in Theorem \ref{thm:len1} with $(1-q)^{k-2}$, taking the limit $q \rightarrow 1$ and making a shift from $k+2$ to $k$ we get as a Corollary for $k \geq 3$ the following formula
	\[ (k-1) \zeta(k) = \sum_{n=2}^{k-1} (2^n-2) \zeta(k-n,n) \,,\]
	which is a combination of the classical and the weighted sum formula (\cite{OZ}, Theorem 3) for double zeta values.
\end{remark}

\subsection{Multiple Eisenstein series and derivatives of $g^\sh$}

As mentioned in the introduction our motivation of studying the series $g^\sh$ are their appearance in the Fourier expansion of the multiple Eisenstein series $G^\sh_{k_1,\dots,k_r} \in \C[[q]]$. For the $\Q$-vector space spanned by all multiple Eisenstein series of weight $k$ we write 
\[ \mes_{k} := \big\langle G^\sh_{k_1,\dots,k_r}  \in \C[[q]] \mid k_1+\dots+k_r = k\,, 0\leq r \leq k \big\rangle_{\Q} \quad\text{and set}\quad \mes= \sum_{k\geq 0} \mes_k\,. \]

The connection of $G^\sh$ and $g^\sh$ is given by a complicated but explicit formula, the Goncharov coproduct, in \cite{BT}. By abuse of notation we will consider $G^\sh$ as a $\Q$-linear map  

\begin{align*}
G^\sh : \h^1 &\longrightarrow \C[[q]] \,,\\
w = e_{k_1} \dots e_{k_r} &\longmapsto G^\sh(w) := G^\sh_{k_1,\dots,k_r} \,.
\end{align*}
As shown in \cite{BT}, this map is an algebra homomorphism from $\h_\sh^1$ to $\C[[q]]$.
 Recall that we defined for words $u,v \in \h^1$ the element $\ds(u,v) \in \h^1$ by
 \[ \ds(u,v) = u \ast v - u \shh v \,. \]
 As seen in Theorem \ref{thm:edsh} it is $\zeta^\sh(\ds(u,v) ) =0$ for all $u \in \h^1$, $v\in \h^0$ and conjecturally these give all relations between multiple zeta values. A natural question therefore is, in which cases we have  $G^\sh(\ds(u,v) ) =0$. This will not be the case for all $u \in \h^1$ and $v\in \h^0$ and we will see below, that the failure of the extended double shuffle relations for multiple Eisenstein series has a connection to the action of the operator $\dif$. But since the definition of $G^\sh$ is quite complicated, we need to restrict our attention to the series $g^\sh$. Luckily numerical calculations suggests, that these two objects have a really close connection.  
 To make clear what we mean by this we first define for $k\geq 1$
\[ \G^\sh_{\leq k} := \big\langle g^\sh_{k_1,\dots,k_r}  \in \G \mid k_1+\dots+k_r \leq k\,, 0\leq r \leq k \big\rangle_{\Q} \,. \]

The motivation for this are the following questions, which are all motivated by numerical experiments and which are expected to be true.
\begin{question}\label{qu:mes}
	\begin{enumerate}[i)] 
		\item Do we have $(2\pi i)^2 \dif \mes_k \subset \mes_{k+2}$ and $\dif \G^\sh_{\leq k} \subset \G^\sh_{\leq k+2}$? 
		\item Is the map $F$, given by
		\begin{align*}
		F: \mathcal{E}_k &\longrightarrow \bigslant{\G^\sh_{\leq k} }{\G^\sh_{\leq k-1} } \\
		G^{\sh}_{k_1,\ldots,k_r} &\longmapsto 	g^{\sh}_{k_1,\ldots,k_r}\,,
		\end{align*}
		an isomorphism of $\Q$-vector spaces? 
		\item Assuming i), does the map $F$ satisfy $d F(f) = F((2\pi i)^2 \dif f)$ for all $f \in \mes_k$?
	\end{enumerate}
\end{question}

	\begin{proposition}\label{prop:dgk}
		For $k\geq 1$ we have 
		\begin{align*}
		\dif  g^\sh_{k}  &\equiv 2 k \cdot  \mg^\sh(\ds(e_1,e_{k+1}))  \qquad  \qquad \,\mod \G^\sh_{\leq k+1}
		\end{align*} 
	\end{proposition}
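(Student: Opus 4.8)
The plan is to combine Theorem \ref{thm:len1} with an explicit evaluation of $\ds(e_1,e_{k+1})$ and then to absorb the remaining top-weight part into finite double shuffle relations. First I would note that, by \eqref{eq:thm1}, $\dif g^\sh_k=\mg^\sh(W_k)$ for the single word $W_k:=k(k+1)e_{k+2}-k\sum_{n=2}^{k+1}(2^n-2)e_{k+2-n}e_n\in\h^1$. Next I would evaluate $\ds(e_1,e_{k+1})=e_1\ast e_{k+1}-e_1\shh e_{k+1}$ directly from the definitions: the harmonic product gives $e_1\ast e_{k+1}=e_1e_{k+1}+e_{k+1}e_1+e_{k+2}$, and shuffling the single letter $e_1$ into $e_{k+1}=e_1e_0^{k}$ gives $e_1\shh e_{k+1}=2e_1e_{k+1}+\sum_{n=2}^{k+1}e_ne_{k+2-n}$, so after the $e_{k+1}e_1$ terms cancel one finds $\ds(e_1,e_{k+1})=e_{k+2}-\sum_{m=1}^{k}e_me_{k+2-m}$. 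Since both sides of the asserted congruence lie in the image of $\mg^\sh$, it then suffices to prove $\mg^\sh(U_k)\in\G^\sh_{\leq k+1}$ for the homogeneous weight-$(k+2)$ element $U_k:=W_k-2k\,\ds(e_1,e_{k+1})\in\h^1$.

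The key input is that the finite double shuffle relations persist under $\mg^\sh$ modulo lower weight: for all $a,b\geq 2$ one has $\mg^\sh(\ds(e_a,e_b))\in\G^\sh_{\leq a+b-1}$. I would prove this as follows. Since $\mg^\sh$ is a homomorphism for $\sh$ (Theorem \ref{thm:gshfacts} iii)) and $g^\sh_a=g_a$ in depth one (Theorem \ref{thm:gshfacts} ii)), we get $\mg^\sh(e_a\shh e_b)=g^\sh_ag^\sh_b=g_ag_b$. Expanding $g_ag_b=\mg(e_a\boxast e_b)$ by the $\boxast$-product (Proposition \ref{prop:h1stuffle}) yields the top-weight part $g_{a,b}+g_{b,a}+g_{a+b}$ together with the correction $\sum_{j=1}^a\lambda^j_{a,b}g_j+\sum_{j=1}^b\lambda^j_{b,a}g_j$, all of weight $<a+b$. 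For $a,b\geq 2$, Theorem \ref{thm:gshfacts} ii) identifies $g_{a,b}=g^\sh_{a,b}$ and $g_{b,a}=g^\sh_{b,a}$, so the top-weight part of $\mg^\sh(e_a\shh e_b)$ is exactly $\mg^\sh(e_a\ast e_b)=g^\sh_{a,b}+g^\sh_{b,a}+g^\sh_{a+b}$; hence $\mg^\sh(\ds(e_a,e_b))=\mg^\sh(e_a\ast e_b)-\mg^\sh(e_a\shh e_b)$ is the lower-weight correction alone.

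It then remains to express $U_k$ through these admissible relations. I would establish the purely algebraic identity in the weight-$(k+2)$ part of $\h^1$
\begin{equation*}
W_k-2k\,\ds(e_1,e_{k+1})=\sum_{\substack{a+b=k+2\\ a,b\geq 2}}c_{a,b}\,\ds(e_a,e_b)\tag{$\star$}
\end{equation*}
for suitable $c_{a,b}\in\Q$; for example one checks $U_2=2\,\ds(e_2,e_2)$, $U_3=6\,\ds(e_2,e_3)$ and $U_4=8\,\ds(e_2,e_4)+4\,\ds(e_3,e_3)$. Comparing the coefficient of the depth-one word $e_{k+2}$ (which equals $1$ in every $\ds(e_a,e_b)$) already forces $\sum_{a,b}c_{a,b}=k(k-1)$, while the depth-two coefficients are pinned down and verified against the explicit shuffle numbers $\binom{a+b-i-1}{a-i}+\binom{a+b-i-1}{b-i}$ of $e_ie_{k+2-i}$ in $e_a\shh e_b$. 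Granting $(\star)$, applying $\mg^\sh$ and the lemma of the previous paragraph gives $\mg^\sh(U_k)=\sum_{a,b}c_{a,b}\,\mg^\sh(\ds(e_a,e_b))\in\G^\sh_{\leq k+1}$, which is precisely the assertion.

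I expect the main obstacle to be the verification of $(\star)$ for general $k$. Although it is a finite linear-algebra problem for each fixed $k$ (all terms are homogeneous of weight $k+2$, so no lower-weight slack can enter $(\star)$ itself), a uniform proof requires controlling the depth-two shuffle coefficients simultaneously, and the cleanest route is almost certainly a generating-function computation in the spirit of the proof of Theorem \ref{thm:len1}: encode the two sides of $(\star)$ as coefficients of two-variable series built from $\geng$ and $\geng_\sh$ and compare them, rather than solving the linear system by hand. Everything else, namely the evaluation of $\ds(e_1,e_{k+1})$ and the transfer lemma for $\ds(e_a,e_b)$, is routine given the results already established.
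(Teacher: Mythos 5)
Your reduction is sound as far as it goes: the evaluation $\ds(e_1,e_{k+1})=e_{k+2}-\sum_{m=1}^{k}e_me_{k+2-m}$ is correct, and so is your transfer lemma $\mg^\sh(\ds(e_a,e_b))\in\G^\sh_{\leq a+b-1}$ for $a,b\geq 2$, proved exactly as you indicate. The genuine gap is $(\star)$, which you verify only for $k\leq 4$ and otherwise support only by matching one coefficient. This cannot be waved through: the elements $\ds(e_a,e_b)$ with $a,b\geq2$, $a+b=k+2$ span a subspace of dimension at most $\lfloor k/2\rfloor$ inside the $(k+2)$-dimensional homogeneous piece you work in, so the linear system for the $c_{a,b}$ is heavily overdetermined, and its solvability is precisely the mathematical content of your approach. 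Fortunately $(\star)$ is true, with the uniform solution $c_{a,b}=k$ over ordered pairs; indeed the exact identity $W_k=k\sum_{a+b=k+2,\,a,b\geq1}\ds(e_a,e_b)$ holds in $\h^1$, and $(\star)$ follows by moving the $a=1$ and $b=1$ terms, i.e.\ $2k\,\ds(e_1,e_{k+1})$, to the other side. To prove it, set $E_1(X)=\sum_{a\geq1}e_aX^{a-1}$ and $E_2(X,Y)=\sum_{a,b\geq1}e_ae_bX^{a-1}Y^{b-1}$; then
\begin{align*}
\sum_{a,b\geq1}(e_a\ast e_b)\,X^{a-1}Y^{b-1}&=E_2(X,Y)+E_2(Y,X)+\frac{E_1(X)-E_1(Y)}{X-Y}\,,\\
\sum_{a,b\geq1}(e_a\shh e_b)\,X^{a-1}Y^{b-1}&=E_2(X,X+Y)+E_2(Y,X+Y)\,,
\end{align*}
and setting $Y=X$ shows that $\sum_{a+b=k+2}\ds(e_a,e_b)$ is the coefficient of $X^k$ in $E_1'(X)+2E_2(X,X)-2E_2(X,2X)$, which is exactly $W_k/k$. (This computation is the preimage under $\mg^\sh$ of identity \eqref{eq:maineq} in the proof of Theorem \ref{thm:len1}; your checks $U_2=2\ds(e_2,e_2)$, $U_3=6\ds(e_2,e_3)$, $U_4=8\ds(e_2,e_4)+4\ds(e_3,e_3)$ are all instances of $c_{a,b}=k$ after symmetrizing.)

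Once this is filled in, your argument is correct but takes a genuinely different, and much heavier, route than the paper's. The paper never invokes Theorem \ref{thm:len1}: since $g^\sh_k=g_k$, formula \eqref{eq:difong} gives $\dif g^\sh_k=k\,g^{(1)}_{k+1}$ on the nose, while Proposition \ref{prop:gshing} together with the quasi-shuffle product yields $\mg^\sh(\ds(e_1,e_{k+1}))\equiv\tfrac12\,g^{(1)}_{k+1}$ modulo $\G^\sh_{\leq k+1}$; both sides of the proposition are thus identified with the same bi-bracket $g^{(1)}_{k+1}$ and the statement drops out in a few lines. Your route instead consumes Theorem \ref{thm:len1} as input and eliminates the bi-brackets entirely in favor of an exact combinatorial identity in $\h^1$ --- in effect proving the Remark that follows the proposition in the paper. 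That is a legitimate trade-off, but note that the paper's mechanism is the one that scales to depth $2$ and $3$ (Lemma \ref{lem:gdsh1}, Theorem \ref{thm:dgshlen2len3}), where no analogue of Theorem \ref{thm:len1} is available to start from.
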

\begin{proof}	
Notice that by Proposition \ref{prop:gshing}
	\begin{align*}
	g^{\sh}_{k_1,k_2}  = g_{k_1,k_2} + \delta_{k_1,1} \cdot \frac{1}{2}  g^{(1)}_{k_2}  \,\mod \G^\sh_{\leq k_1+k_2-1}
	\end{align*} 
	and since the quasi-shuffle product $\boxast$ equals the harmonic product $\ast$ if one divides out lower weight, it is
	\begin{align*}
	g^{\sh}_{k_1} \cdot g^{\sh}_{k_2} =g_{k_1} \cdot g_{k_2} \equiv g_{k_1,k_2}+g_{k_2,k_1}+g_{k_1+k_2}    \,\mod \G^\sh_{\leq k_1+k_2-1}\,.
	\end{align*} 
	With this we obtain 
	\begin{align*}
	\mg^\sh(\ds(e_{k_1},e_{k_2})) &= \mg^\sh(e_{k_1} \ast e_{k_2}) -  \mg^\sh(e_{k_1} \shh e_{k_2})\\
	&= g^\sh_{k_1,k_2} +  g^\sh_{k_2,k_1} +  g^\sh_{k_1+k_2} - g^{\sh}_{k_1} \cdot g^{\sh}_{k_2}  \\
	&\equiv \frac{1}{2} \delta_{k_1,1} g^{(1)}_{k_2} + \frac{1}{2} \delta_{k_2,1} g^{(1)}_{k_1}  \qquad  \qquad \,\mod \G^\sh_{\leq k_1+k_2-1}\,.
	\end{align*} 
	The statement now follows since $\dif  g^\sh_{k} = k \cdot  g^{(1)}_{k+1}$.
\end{proof}
\begin{remark}
We remark that the explicit expression of $\dif g^\sh_k$ in Theorem \ref{eq:thm1} can also be written as
\[	\frac{1}{k} \dif  g^\sh_{k} = \sum_{i=1}^{k-1} \mg^\sh(\ds(e_i,e_{k+2-i})) \,.\]
Therefore from Proposition \ref{prop:dgk} we can deduce $\sum_{i=2}^{k-2} \mg^\sh(\ds(e_i,e_{k+2-i})) \in \G^\sh_{\leq k+1}$.
\end{remark}
Considering question \ref{qu:mes} one should have the same formula for the derivative of Eisenstein series as the above Proposition. This is indeed the case:
\begin{theorem}
	For $k\geq 1$, the derivative of the Eisenstein series $G^\sh_k$ is given by
	\begin{align*}
	(2\pi i)^2 \dif G^\sh_{k} &= 2 k \cdot G^\sh(\ds(e_1,e_{k+1}))= G^\sh_{1,k+1} + G^\sh_{k+1,1} + G^\sh_{k+2} - G^\sh_{k+1} \cdot G^\sh_{1} \in \mes_{k+2}\,.
	\end{align*}
\end{theorem}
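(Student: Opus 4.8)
The plan is to prove the two equalities separately: the last one is purely formal, while the first is an analytic identity in $\C[[q]]$ that I would settle by comparing Fourier expansions. For the last equality, recall that $G^\sh\colon \h^1_\sh\to\C[[q]]$ is an algebra homomorphism (\cite{BT}) and that $\ds(e_1,e_{k+1})=e_1\ast e_{k+1}-e_1\shh e_{k+1}$ with $e_1\ast e_{k+1}=e_1e_{k+1}+e_{k+1}e_1+e_{k+2}$. Applying $G^\sh$ and using $G^\sh(e_1\shh e_{k+1})=G^\sh_1\cdot G^\sh_{k+1}$ gives $G^\sh(\ds(e_1,e_{k+1}))=G^\sh_{1,k+1}+G^\sh_{k+1,1}+G^\sh_{k+2}-G^\sh_{k+1}\cdot G^\sh_1$, and the same identity exhibits this element as lying in the image of $G^\sh$, hence in $\mes_{k+2}$. (Since $G^\sh$ is a homomorphism the middle and right expressions already agree without any prefactor, so the $2k$ printed in the middle term is spurious and should be dropped; it is a remnant of the lower-weight normalization of Proposition \ref{prop:dgk}.)

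For the remaining equality $(2\pi i)^2\dif G^\sh_k=G^\sh(\ds(e_1,e_{k+1}))$ I would pass to the $q$-series $g^\sh$. The depth-one Fourier expansion is $G^\sh_k=\zeta^\sh(k)+(-2\pi i)^k g^\sh_k$, so, because $\dif$ annihilates the constant and $(2\pi i)^2=(-2\pi i)^2$, one gets $(2\pi i)^2\dif G^\sh_k=(-2\pi i)^{k+2}\dif g^\sh_k$. Theorem \ref{thm:len1} then makes the left-hand side completely explicit: it is $(-2\pi i)^{k+2}$ times a fixed rational combination of $g^\sh_{k+2}$ and the $g^\sh_{k+2-n,n}$, a series with vanishing constant term. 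It therefore suffices to show that the Fourier expansion of $G^\sh(\ds(e_1,e_{k+1}))$ has vanishing constant term and non-constant part equal to $(-2\pi i)^{k+2}\dif g^\sh_k$.

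The constant term of $G^\sh(\ds(e_1,e_{k+1}))$ is $\zeta^\sh(\ds(e_1,e_{k+1}))$, which vanishes by the extended double shuffle relations (Theorem \ref{thm:edsh}, applied with $e_{k+1}\in\h^0$, $e_1\in\h^1$ and the symmetry $\ds(e_1,e_{k+1})=\ds(e_{k+1},e_1)$); this matches the left-hand side. For the non-constant part I would insert the explicit depth-$\le 2$ Fourier expansions of $G^\sh_{1,k+1}$, $G^\sh_{k+1,1}$ and $G^\sh_{k+2}$ and of the product $G^\sh_{k+1}G^\sh_1$, which are furnished by the Goncharov-coproduct formula of \cite{BT}, and then rewrite every $g^\sh$ occurring as a combination of the double-indexed series $g$ via Proposition \ref{prop:gshing}. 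Each such Fourier coefficient is a multiple zeta value times an explicit power of $2\pi i$, so the identity to be proved splits into two parts: the vanishing of all contributions whose multiple zeta value is genuinely transcendental, and the matching of the remaining rational multiple of $(-2\pi i)^{k+2}$ with $\dif g^\sh_k$.

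I expect the decisive difficulty to lie in this last bookkeeping. The transcendental contributions (those carrying an odd single zeta value or a higher-depth multiple zeta value) must cancel so that the whole non-constant part becomes a pure rational multiple of $(-2\pi i)^{k+2}$; these cancellations should be exactly the weight-graded refinements of the relation $\zeta^\sh(\ds(e_1,e_{k+1}))=0$ already used for the constant term. What then survives combines $\mg^\sh(\ds(e_1,e_{k+1}))$ with the rational (even-zeta) contributions, and checking that this equals $\dif g^\sh_k$ is an identity in $\G$ that I would verify using the double shuffle relations for the $g$ (Theorem \ref{thm:gdsh}); as in the proof of Theorem \ref{thm:len1}, passing to the associated generating series should make this final computation finite and uniform in $k$.
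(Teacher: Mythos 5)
You correctly observed that the printed statement is internally inconsistent: since $G^\sh$ is $\Q$-linear and an algebra homomorphism on $\h^1_\sh$, one has exactly
\[
G^\sh(\ds(e_1,e_{k+1})) \;=\; G^\sh_{1,k+1}+G^\sh_{k+1,1}+G^\sh_{k+2}-G^\sh_{k+1}\cdot G^\sh_1\,,
\]
so the middle and right expressions can only agree if the factor $2k$ is present in both or in neither. But you resolved the inconsistency in the wrong direction: the factor $2k$ is genuine, and the typo is its \emph{absence} in front of the explicit right-hand expression. First, the theorem is introduced as the exact $G^\sh$-analogue of Proposition \ref{prop:dgk} (``one should have the same formula for the derivative of Eisenstein series as the above Proposition''), and there the factor $2k$ is forced by $\dif g^\sh_k=k\,g^{(1)}_{k+1}$ versus $\mg^\sh(\ds(e_1,e_{k+1}))\equiv\tfrac12 g^{(1)}_{k+1}$; it is not an artifact of working modulo lower weight. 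Second, the weight-three example in the introduction, $G^\sh_{1,2}-G^\sh_3=\tfrac{(2\pi i)^2}{2}\dif G_1$, already exhibits this factor $2=2k$ at $k=1$ (the computation below shows the difference should read $G^\sh_3-G^\sh_{1,2}$, but the factor is unambiguous). Third, and decisively, your normalization is false. Write $G^\sh_{1,2}=\zeta^\sh(1,2)+\alpha\,\zeta(2)\,\hat{g}^\sh_1+\hat{g}^\sh_{1,2}$ with $\alpha\in\Q$, the only shape permitted by the Fourier expansion recalled in the introduction. Using $G^\sh_1=\hat{g}^\sh_1=(-2\pi i)g_1$, Theorem \ref{thm:len1}, $\ds(e_1,e_2)=e_3-e_1e_2$ and Euler's $\zeta(1,2)=\zeta(3)$, one gets
\begin{align*}
(2\pi i)^2\dif G^\sh_1 &= (-2\pi i)^3\,\dif g^\sh_1 \;=\; 2\,(-2\pi i)^3\bigl(g^\sh_3-g^\sh_{1,2}\bigr)\,,\\
G^\sh(\ds(e_1,e_2)) &= G^\sh_3-G^\sh_{1,2} \;=\; (-2\pi i)^3\bigl(g^\sh_3-g^\sh_{1,2}\bigr)-\alpha\,\zeta(2)\,\hat{g}^\sh_1\,.
\end{align*}
Your version of the theorem equates these, which forces $(-2\pi i)^3(g^\sh_3-g^\sh_{1,2})=-\alpha\,\zeta(2)\,\hat{g}^\sh_1$; since $\zeta(2)=-(2\pi i)^2/24$, this reads $\tfrac12\dif g_1=g^\sh_3-g^\sh_{1,2}=\tfrac{\alpha}{24}\,g_1$, which is false for every $\alpha$ (the coefficient of $q^n$ in $\dif g_1$ is $n\sigma_0(n)$, not a fixed multiple of $\sigma_0(n)$). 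Keeping the factor $2k$, the same computation merely requires $\alpha=0$, which is consistent with the expansion of \cite{BT}. So the correct statement is $(2\pi i)^2\dif G^\sh_k=2k\,G^\sh(\ds(e_1,e_{k+1}))=2k\bigl(G^\sh_{1,k+1}+G^\sh_{k+1,1}+G^\sh_{k+2}-G^\sh_{k+1}G^\sh_1\bigr)$, and your proposal sets out to prove an identity that is off by the factor $2k$.

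Apart from this, your plan --- vanishing of the constant term via Theorem \ref{thm:edsh}, and matching of the non-constant part by inserting the explicit Fourier expansions of \cite{BT} together with Proposition \ref{prop:gshing} and Theorem \ref{thm:len1} --- is in substance exactly the alternative argument the paper itself indicates (its printed proof is a citation of unpublished work of M.~Kaneko plus this one-sentence sketch). However, your write-up defers all the actual content to the final ``bookkeeping'' paragraph, which asserts rather than proves that the transcendental middle terms cancel and that the surviving rational part equals $\dif g^\sh_k$. That finite computation (the middle terms of $G^\sh_{1,k+1}$, $G^\sh_{k+1,1}$ and of $G^\sh_{k+1}G^\sh_1$ are explicit in \cite{BT}) is unavoidable, and carrying it out for even a single value of $k$ would have exposed the normalization error above.
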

\begin{proof}
This was first proven by M. Kaneko in an unpublished work. It can also be obtained by using the explicit formulas for the Fourier expansions of Double Eisenstein series presented in \cite{BT} and the quasi-shuffle product formula for the functions $g$ introduced in the beginning.
\end{proof}

We now want to give the depth $2$ and $3$ version of Proposition \ref{prop:dgk}. For this we need the following two Lemma.

\begin{lemma}\label{lem:g10ingsh} For $k_1,k_2 \geq 2$ it is $g^{(1,0)}_{k_1,k_2} , g^{(0,1)}_{k_1,k_2} \in \G^\sh_{\leq k_1+k_2+1}$.
\end{lemma}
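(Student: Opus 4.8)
The plan is to reduce the statement to the depth-three formula of Proposition~\ref{prop:gshing}~ii) plus three auxiliary facts: that $\G^\sh$ is closed under multiplication in a weight-graded way, that $g^{(1)}_m\in\G^\sh_{\leq m+1}$ for all $m\geq 1$, and that the leading-one brackets $g_{1,m}$ lie in $\G^\sh_{\leq m+1}$. First I would specialise Proposition~\ref{prop:gshing}~ii) to the triples $(1,k_1,k_2)$ and $(k_1,1,k_2)$ (both with $k_1,k_2\geq 2$, so the $\delta_{k_2,1}$ and $\delta_{k_1k_2,1}$ terms vanish), obtaining
\[ g^{(1,0)}_{k_1,k_2} = 2 g^\sh_{1,k_1,k_2} - 2 g_{1,k_1,k_2} + g_{k_1,k_2}, \qquad g^{(0,1)}_{k_1,k_2} = 2 g^\sh_{k_1,1,k_2} - 2 g_{k_1,1,k_2} + g^{(1,0)}_{k_1,k_2} + g_{k_1,k_2}. \]
Since $g_{k_1,k_2}=g^\sh_{k_1,k_2}$ by Theorem~\ref{thm:gshfacts}~ii) and the three $g^\sh$ above have weight $k_1+k_2+1$, everything on the right lies in $\G^\sh_{\leq k_1+k_2+1}$ except the two non-admissible brackets $g_{1,k_1,k_2}$ and $g_{k_1,1,k_2}$. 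Thus the lemma reduces to placing these two brackets in $\G^\sh_{\leq k_1+k_2+1}$.

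The main obstacle is exactly here: a single harmonic-product relation only yields the symmetric combination of the two bad brackets and cannot isolate either one. Writing $W=k_1+k_2$ and using that $\mg$ is a $\boxast$-homomorphism on $\h^1$ (Proposition~\ref{prop:h1stuffle}), the product $g_1\cdot g_{a,b}$ (with $a,b\geq 2$, $a+b=W$) expands as $g_{1,a,b}+g_{a,1,b}+g_{a,b,1}+g_{1+a,b}+g_{a,1+b}$ plus depth-two $\lambda$-corrections of strictly lower weight. Every term other than $g_{1,a,b}$ and $g_{a,1,b}$ is either admissible or of the form $g_{1,m}$, hence in $\G^\sh$; since $g_1\cdot g_{a,b}\in\G^\sh_{\leq W+1}$ this gives only the relation $g_{1,a,b}+g_{a,1,b}\in\G^\sh_{\leq W+1}$.

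To break the symmetry I would introduce a second stuffle relation whose left-hand side is again manifestly in $\G^\sh$, namely $g_a\cdot g_{1,b}$: here $g_a=g^\sh_a$ and $g_{1,b}\in\G^\sh_{\leq b+1}$, so by Theorem~\ref{thm:gshfacts}~iii) (and the fact that the shuffle product preserves weight) the product lies in $\G^\sh_{\leq W+1}$. Its $\boxast$-expansion equals $g_{a,1,b}+g_{1,a,b}+g_{1,b,a}$ plus admissible terms, the depth-two brackets $g_{1,a+b}$, $g_{1,m}$ and $g_{1,1}$, and lower-weight corrections, all of which are in $\G^\sh$. Subtracting the relation of the previous paragraph isolates $g_{1,b,a}\in\G^\sh_{\leq W+1}$; taking $(a,b)=(k_2,k_1)$ yields $g_{1,k_1,k_2}\in\G^\sh$, and then the first relation with $(a,b)=(k_1,k_2)$ gives $g_{k_1,1,k_2}\in\G^\sh$. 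Substituting both back into the two displayed identities proves the lemma.

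It remains to supply the auxiliary facts. Multiplicative, weight-graded closure of $\G^\sh$ is immediate from Theorem~\ref{thm:gshfacts}~iii). For the leading-one brackets I would use Proposition~\ref{prop:gshing}~i) in the form $g_{1,m}=g^\sh_{1,m}-\frac{1}{2}g^{(1)}_m+\frac{1}{2}g_m$, reducing everything to $g^{(1)}_m\in\G^\sh_{\leq m+1}$. For $m\geq 2$ this is Theorem~\ref{thm:len1}, since $g^{(1)}_m=\frac{1}{m-1}\dif g^\sh_{m-1}$, while the case $m=1$ follows from the partition relation $g^{(1)}_1=g_2$ (Theorem~\ref{thm:gdsh}~i), which also accounts for the stray $g_{1,1}$ appearing among the lower-weight corrections above.
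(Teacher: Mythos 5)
Your proof is correct, and it reaches the conclusion by a genuinely different mechanism than the paper's, even though it draws on the same toolkit (Proposition \ref{prop:gshing}, the stuffle product $g_1\cdot g_{k_1,k_2}$, and Theorem \ref{thm:len1} to place $g^{(1)}_m$ in $\G^\sh_{\leq m+1}$). The paper never isolates the individual non-admissible brackets $g_{1,k_1,k_2}$ and $g_{k_1,1,k_2}$: it only controls their \emph{sum} via $g_1\cdot g_{k_1,k_2}$, and then exploits the fact that in $2g^\sh_{1,k_1,k_2}+2g^\sh_{k_1,1,k_2}$ the $g^{(1,0)}_{k_1,k_2}$-terms cancel, so this symmetric combination immediately yields $g^{(0,1)}_{k_1,k_2}\in\G^\sh_{\leq k_1+k_2+1}$; afterwards it obtains $g^{(1,0)}_{k_1,k_2}$ from the bi-bracket stuffle product $g^{(1)}_{k_1}\cdot g_{k_2}=g^{(1,0)}_{k_1,k_2}+g^{(0,1)}_{k_2,k_1}+\sum_j\gamma_j g^{(1)}_j$, using Theorem \ref{thm:len1} once more for the error terms. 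You instead break the symmetry between $g_{1,k_1,k_2}$ and $g_{k_1,1,k_2}$ while multiplying only ordinary brackets: the extra stuffle product $g_a\cdot g_{1,b}$ (legitimate on the left-hand side because $g_{1,b}\in\G^\sh_{\leq b+1}$ and $\G^\sh$ is multiplicatively closed in a weight-graded way) produces $g_{a,1,b}+g_{1,a,b}+g_{1,b,a}$ modulo terms already known to lie in $\G^\sh$, and subtracting the relation coming from $g_1\cdot g_{a,b}$ isolates $g_{1,b,a}$; both bi-brackets then fall out by inverting the two depth-three formulas of Proposition \ref{prop:gshing}. The trade-off: your route proves the strictly stronger intermediate statement that each of $g_{1,k_1,k_2}$, $g_{k_1,1,k_2}$ lies in $\G^\sh_{\leq k_1+k_2+1}$ individually, and it stays entirely inside $\G^{(0)}$, at the cost of a second depth-one-times-depth-two stuffle expansion; the paper's cancellation trick is shorter but yields only the sum of the two triples. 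Your handling of the auxiliary facts is also sound, and in two places slightly different from the paper: you get $g_{1,m}\in\G^\sh_{\leq m+1}$ from Proposition \ref{prop:gshing} i) plus Theorem \ref{thm:len1} rather than from the product $g_1\cdot g_m$, and you explicitly cover the boundary case $g^{(1)}_1=g_2$ via the partition relation, which the paper passes over in silence.
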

\begin{proof} 
	Recall that we have  $g_k = g^\sh_k$  for all $k\geq 1$ and $g_{a,b} = g^\sh_{a,b}$ when $a>1$ and $b\geq 1$.
	First we notice that also $g_{1,b} \in \G^\sh_{\leq b+1}$ for all $b \geq 1$: By the quasi-shuffle product it is $g_1 \cdot g_b = g_{1,b} + g_{b,1}+ \sum_{j=1}^{b+1} \alpha_j g_j$ for some $\alpha_j \in \Q$. Since $g_1 \cdot g_b,\, g_j \in \G^\sh_{b+1}$ we deduce  $g_{1,b} \in \G^\sh_{\leq b+1}$.
	
	Now consider the quasi-shuffle product in depth $3$
	\[ g_1 \cdot g_{k_1,k_2}  = g_{1,k_1,k_2} + g_{k_1,1,k_2} + g_{k_1,k_2,1} + \sum_{a+b \leq k_1+k_2} \beta_{a,b} \cdot g_{a,b} \,, \]
	for some $\beta_{a,b} \in \Q$. Since for $k_1,k_2 \geq 2$  we have $ g_{a,b},\,g_1 \cdot g_{k_1,k_2}, \, g_{k_1,k_2,1}  \in \G^\sh_{\leq k_1+k_2+1}$ it follows that  $g_{1,k_1,k_2} + g_{k_1,1,k_2} \in \G^\sh_{\leq k_1+k_2+1}$. 
	
	Using the explicit formula for $g^\sh_{a,b,c}$ from Proposition \ref{prop:gshing} it is easy to see that for $k_1,k_2 \geq 2$ 
	
	\[2 g^\sh_{1,k_1,k_2} + 2 g^\sh_{k_1,1,k_2} = 2 g_{1,k_1,k_2} + 2 g_{k_1,1,k_2} - 2g_{k_1,k_2} + g^{(0,1)}_{k_1,k_2} \,.\]
From this we observe $g^{(0,1)}_{k_1,k_2} \in \G^\sh_{\leq k_1+k_2+1}$ since by the discussion above every other term in this equation is also in $\G^\sh_{\leq k_1+k_2+1}$.
	
		Now we want to show that also $g^{(1,0)}_{k_1,k_2} \in \G^\sh_{\leq k_1+k_2+1}$. For this consider again that for some $\gamma_j \in \Q$ the quasi-shuffle product of $g^{(1)}_{k_1} \cdot g_{k_2}$ reads
\[  g^{(1)}_{k_1} \cdot g_{k_2} = g^{(1,0)}_{k_1,k_2} + g^{(0,1)}_{k_2,k_1}+ \sum_{j=1}^{k_1+k_2} \gamma_j g^{(1)}_j \,.\]

By Theorem \ref{thm:len1} we know that $g^{(1)}_j = \frac{1}{(j-1)} \dif g_{j-1}$ is again an Element in $\G^\sh_{\leq k_1+k_2+1}$ for $j \leq k_1+k_2$. Since we proved $g^{(0,1)}_{k_1,k_2} \in \G^\sh_{\leq k_1+k_2+1}$ above we therefore also obtain that  $g^{(1,0)}_{k_1,k_2} \in \G^\sh_{\leq k_1+k_2+1}$.
	
\end{proof}
%
%

Similar to the depth $1$ case we will "measure" the failure of the double shuffle relations of $g^\sh$ and then relate this to the action of the operator $\dif$.
\begin{lemma}\label{lem:gdsh1} Let $k_1,k_2,k_3,k_4 \geq 1$ and $k=k_1+\dots+k_4$ be such that there is exactly one index $1 \leq j \leq 4$ with $k_j=1$. Then  we have
\begin{enumerate}[i)]	
	\item $\begin{aligned}[t]
		\mg^\sh(\ds(e_{k_1}\,,\,e_{k_2}e_{k_3}))\equiv &\,\,\, \delta_{k_1,1} \frac{1}{2} g^{(0,1)}_{k_2,k_3} +  \delta_{k_3,1} \frac{1}{2}\left( g^{(0,1)}_{k_2,k_1} - g^{(1,0)}_{k_2,k_1}\right)   \mod \G^\sh_{\leq k_1+k_2+k_3-1} \,.
	\end{aligned}$
		
	\item $\begin{aligned}[t]
	\mg^\sh(\ds(e_{k_1}\,,\,e_{k_2}e_{k_3}e_{k_4}))\equiv &\,\,\, \delta_{k_1,1} \frac{1}{2} g^{(0,0,1)}_{k_2,k_3,k_4} +  \delta_{k_4,1} \frac{1}{2}\left( g^{(0,0,1)}_{k_2,k_3,k_1} - g^{(0,1,0)}_{k_2,k_3,k_1}\right) \mod \G^\sh_{\leq k-1} \,.
	\end{aligned}$
	
	\item $\begin{aligned}[t]
		\mg^\sh(\ds(e_{k_1}e_{k_2}\, , \,e_{k_3}e_{k_4}))\equiv& \,\,\, \delta_{k_2,1} \frac{1}{2} \left( g^{(0,0,1)}_{k_1,k_3,k_4} - g^{(1,0,0)}_{k_1,k_3,k_4} + g^{(0,0,1)}_{k_3,k_1,k_4} - g^{(0,1,0)}_{k_3,k_1,k_4}   \right)  \\
		&+ \delta_{k_2,1} \frac{1}{2} \left(g^{(0,1)}_{k_1+k_3,k_4}-g^{(1,0)}_{k_1+k_3,k_4}          \right)  \\
		&+\delta_{k_4,1} \frac{1}{2} \left( g^{(0,0,1)}_{k_1,k_3,k_2} - g^{(0,1,0)}_{k_1,k_3,k_2} + g^{(0,0,1)}_{k_3,k_1,k_2} - g^{(1,0,0)}_{k_3,k_1,k_2}   \right)  \\
		&+ \delta_{k_4,1} \frac{1}{2} \left(g^{(0,1)}_{k_1+k_3,k_2}-g^{(1,0)}_{k_1+k_3,k_2}\right)   \,\,\,\,\,\, \qquad \qquad \mod \G^\sh_{\leq k-1} \,.
	\end{aligned}$
\end{enumerate}		
\end{lemma}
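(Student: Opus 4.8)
The plan is to treat all three identities by a single mechanism and to distinguish the cases by the position of the unique relevant index equal to $1$. In each case I would write $\ds(u,v)=u\ast v-u\shh v$ and apply the map $\mg^\sh$. Since $\mg^\sh$ is an algebra homomorphism from $\h^1_\sh$ to $\G$ (Theorem \ref{thm:gshfacts} iii)), the shuffle part turns into a product, $\mg^\sh(u\shh v)=\mg^\sh(u)\cdot\mg^\sh(v)$, whereas $\mg^\sh(u\ast v)$ is computed by expanding $u\ast v$ into a sum of monomials through the recursive definition of $\ast$. All computations are then carried out modulo lower weight, i.e.\ modulo $\G^\sh_{\leq k-1}$ (with $k$ replaced by $k_1+k_2+k_3$ in i)), keeping only the top-weight contributions.

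First I would isolate the part that cancels. Every $g^\sh_{\vec k}$ occurring is rewritten in the plain series $g$ by Proposition \ref{prop:gshing} (together with its depth-four analogue, obtained by the same computation as in the proof of Theorem \ref{thm:gshfacts} i)); thus $g^\sh_{\vec k}=g_{\vec k}+c_{\vec k}$, where the correction $c_{\vec k}$ is nonzero only if the first or a middle index of $\vec k$ equals $1$ and is given explicitly by the stated $\delta$-terms. The sum of the plain parts $g_{\vec k}$ over the monomials of $u\ast v$ is exactly $\mg(u\ast v)$; since $\boxast$ restricted to $\h^1$ agrees with $\ast$ up to strictly lower weight and $\mg$ is a $\boxast$-homomorphism (Proposition \ref{prop:h1stuffle}, Theorem \ref{thm:gdsh} ii)), one has $\mg(u\ast v)\equiv\mg(u)\cdot\mg(v)=g_u\cdot g_v$ modulo lower weight. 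As this coincides with the plain part of $\mg^\sh(u)\cdot\mg^\sh(v)$, the two cancel and
\[
\mg^\sh(\ds(u,v))\equiv\sum_{w}c_{w}-\bigl(g_u\,c_v+c_u\,g_v+c_u\,c_v\bigr)\pmod{\G^\sh_{\leq k-1}},
\]
the sum running over the monomials $w$ of $u\ast v$ counted with multiplicity.

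Next I would use the hypothesis that exactly one relevant index equals $1$. This already forces $c_u\,c_v=0$, and in the configurations where the $1$ lies in a non-initial slot of a factor (or in a depth-one factor) — the active cases $\delta_{k_1,1},\delta_{k_3,1}$ of i), $\delta_{k_1,1},\delta_{k_4,1}$ of ii), and $\delta_{k_2,1},\delta_{k_4,1}$ of iii) — it also forces $c_u=c_v=0$, so that every surviving term stems from the corrections $c_w$ of those monomials of $u\ast v$ that carry a $1$ in first or middle position. The products still to be evaluated, all of the shape $g_a\cdot g^{(1,\dots)}_{\vec b}$, are handled by the defining formula of $\boxast$ modulo lower weight: only the three stuffle terms survive, each with its binomial coefficient $\binom{d_1+d_2}{d_1}$, while the $\lambda^j_{a,b}$-terms strictly lower the weight and are discarded. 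Upon collecting, the $g^{(1,0)}$-type contributions cancel in pairs and leave precisely the $g^{(0,1)}$-type (and, in iii), merged-index) combinations of the right-hand side (cf.\ Lemma \ref{lem:g10ingsh}); the remaining interior configurations, in which the $1$ sits in a middle slot of the whole index and $c_v\neq0$, must cancel to $0$ and thereby provide a consistency check.

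The hard part will be the combinatorics of iii): the harmonic product of two depth-two words produces monomials of depths two, three and four, and one must track precisely which of them carry a $1$ in first or middle position, apply the correct depth-three and depth-four corrections of Proposition \ref{prop:gshing} with the right signs, and expand the ensuing depth-two products through $\boxast$ with the correct binomials — all in order to recover the full set of eight terms, including the rearrangement of first-slot double-indices $g^{(1,0)}$ into last-slot ones $g^{(0,1)}$ and the appearance of the merged indices $k_1+k_3$. In parts i) and ii) one of the two factors has depth one, so fewer monomials occur and the bookkeeping is lighter; the only ingredient beyond what has been stated is the depth-four case of Proposition \ref{prop:gshing}, needed for ii) and iii), which follows verbatim from the derivative-reduction argument establishing Theorem \ref{thm:gshfacts} i).
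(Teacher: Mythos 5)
Your plan follows the paper's own route almost step for step: decompose $g^\sh_{k_1,\dots,k_r}=g_{k_1,\dots,k_r}+c$ via Proposition \ref{prop:gshing} and its depth-four analogue, expand $u\ast v$ into monomials, convert the shuffle side into a product by Theorem \ref{thm:gshfacts} iii), identify the plain parts of both sides with $\mg(u \ast v)$ through the quasi-shuffle product $\boxast$, and collect the surviving corrections; your compact formula $\mg^\sh(\ds(u,v))\equiv\sum_w c_w-(g_u c_v+c_u g_v+c_u c_v)$ is exactly the bookkeeping the paper carries out in part i), and your treatment of ii) and iii) (depth-four formula, heavier combinatorics, active versus interior positions of the index $1$) matches the paper's sketch as well.

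There is, however, one recurring imprecision that sits exactly at the crux of the lemma. You write that computations are done ``modulo lower weight, i.e.\ modulo $\G^\sh_{\leq k-1}$'', and later that the $\lambda^j_{a,b}$-terms ``strictly lower the weight and are discarded''. But $\G^\sh_{\leq k-1}$ is the span of the $g^\sh$ of weight at most $k-1$; it is \emph{not} the full lower-weight part of $\G$, and a lower-weight element such as $g^{(1)}_j$, $g_{1,b}$, $g_{k_2,1,k_4}$ or $g^{(1,0)}_{a,b}$ is not a priori a $\Q$-linear combination of $g^\sh$'s. Every discarded term must therefore be certified to lie in $\G^\sh_{\leq k-1}$: for $g^{(1)}_j$ this is Theorem \ref{thm:len1} (since $g^{(1)}_j=\tfrac{1}{j-1}\dif g_{j-1}$), for $g^{(1,0)}_{a,b},g^{(0,1)}_{a,b}$ with $a,b\geq 2$ it is Lemma \ref{lem:g10ingsh}, and for plain $g$'s with an interior index $1$ one combines Proposition \ref{prop:gshing} with these two facts. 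This is precisely where the paper invokes those results — Theorem \ref{thm:len1} for the product $g_{k_1}\cdot g^{(1)}_{k_3}$ in part i), Lemma \ref{lem:g10ingsh} for the error terms in ii) and iii) — whereas your single ``cf.\ Lemma \ref{lem:g10ingsh}'' is attached to the final collection step rather than to the discarding steps, so as written the justification behind each ``$\equiv$'' is missing. Once you insert that certification at every such step, and actually carry out the cancellation in the interior-$1$ configurations (the paper verifies it explicitly for i); calling it a ``consistency check'' is not yet a proof that those contributions vanish modulo $\G^\sh_{\leq k-1}$), your argument coincides with the paper's proof.
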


\begin{proof}
	
		i)  Since $g_{k_1,k_3} \in \G^\sh_{\leq k_1+k_2+k_3-1}$ and we assume that there is just one index $j$ with $k_j=1$, i.e. the term with $\delta_{k_1 \cdot k_2,1}$ does not play a role, we get by Proposition \ref{prop:gshing} that 
		\begin{align}\label{eq:gsh3mod}
		g^{\sh}_{k_1,k_2,k_3} &\equiv g_{k_1,k_2,k_3} + \delta_{k_1,1} \cdot \frac{1}{2}    g^{(1,0)}_{k_2,k_3}   +\delta_{k_2,1} \cdot \frac{1}{2} \left( g^{(0,1)}_{k_1,k_3} - g^{(1,0)}_{k_1,k_3}   \right) \mod \G^\sh_{\leq k_1+k_2+k_3-1}
		\end{align}
and therefore
\begin{align*}
\mg^\sh(e_{k_1} \ast e_{k_2} e_{k_3}) &=g^\sh_{k_1,k_2,k_3}+g^\sh_{k_2,k_1,k_3}+g^\sh_{k_2,k_3,k_1}+g^\sh_{k_1+k_2,k_3}+g^\sh_{k_2,k_1+k_3}\\
&\equiv \mg(e_{k_1} \ast e_{k_2} e_{k_3}) +\delta_{k_1,1} \cdot \frac{1}{2}    g^{(1,0)}_{k_2,k_3}   +\delta_{k_2,1} \cdot \frac{1}{2} \left( g^{(0,1)}_{k_1,k_3} - g^{(1,0)}_{k_1,k_3}   \right)  \\
&+\delta_{k_2,1} \cdot \frac{1}{2}    g^{(1,0)}_{k_1,k_3}   +\delta_{k_1,1} \cdot \frac{1}{2} \left( g^{(0,1)}_{k_2,k_3} - g^{(1,0)}_{k_2,k_3}   \right) \\
&+\delta_{k_2,1} \cdot \frac{1}{2}    g^{(1,0)}_{k_3,k_1}   +\delta_{k_3,1} \cdot \frac{1}{2} \left( g^{(0,1)}_{k_2,k_1} - g^{(1,0)}_{k_2,k_1}   \right) \\
&+ \delta_{k_2,1} \frac{1}{2}g^{(1)}_{k_1+k_3} \qquad \qquad\qquad\qquad\qquad\qquad\qquad\mod \G^\sh_{\leq k_1+k_2+k_3-1}\,.
\end{align*}		
On the other hand we have
\begin{align*}
\mg^\sh(e_{k_1} \shh e_{k_2} e_{k_3}) &=\mg^\sh(e_{k_1}) \cdot  \mg^\sh(e_{k_2} e_{k_3})\\
&\equiv g_{k_1} \cdot \left( g_{k_2,k_3} + \delta_{k_2,1} \frac{1}{2}\left(  g^{(1)}_{k_3}  - g_{k_3}\right)\right)\\
&\equiv  \mg(e_{k_1} \ast e_{k_2} e_{k_3}) + \delta_{k_2,1} \frac{1}{2} \left( g^{(0,1)}_{k_1,k_3}+g^{(1,0)}_{k_3,k_1}+g^{(1)}_{k_1+k_3}\right) \mod \G^\sh_{\leq k_1+k_2+k_3-1}
\end{align*}
Here we used again that the extra terms appearing in the quasi-shuffle product all vanish since they are elements in $\G^\sh_{\leq k_1+k_2+k_3-1}$. For the product $g_{k_1} \cdot g_{k_3}^{(1)}$ this is the case because we know  by Theorem \ref{thm:len1} that $g^{(1)}_j \in \G^\sh_{\leq k_1+k_2+k_3-1}$ for $j<k_1+k_2+k_3-1$.

The result follows from $\mg^\sh(\ds(e_{k_1},e_{k_2}e_{k_3})) = \mg^\sh(e_{k_1} \ast e_{k_2} e_{k_3})-\mg^\sh(e_{k_1} \shh e_{k_2} e_{k_3})$.	\\

To prove ii) and iii) we use the same idea as in i). First calculate  $\mg^\sh(e_{k_1}\ast e_{k_2}e_{k_3}e_{k_4})$ and $\mg^\sh(e_{k_1}\ast e_{k_2}e_{k_3}e_{k_4})$ by using \eqref{eq:gsh3mod} and the following formula, which can be obtained using the same technique as in the proof of Proposition  \ref{prop:gshing} together with our assumptions on the $k_j$:
\begin{align*}
		g^{\sh}_{k_1,k_2,k_3,k_4} &\equiv g_{k_1,k_2,k_3,k_4} + \delta_{k_1,1} \cdot \frac{1}{2}    g^{(1,0,0)}_{k_2,k_3,k_4}   +\delta_{k_2,1} \cdot \frac{1}{2} \left( g^{(0,1,0)}_{k_1,k_3,k_4} - g^{(1,0,0)}_{k_1,k_3,k_4}   \right)\\
		&+\delta_{k_3,1} \frac{1}{2} \left( g^{(0,0,1)}_{k_1,k_2,k_4} - g^{(0,1,0)}_{k_1,k_2,k_4}   \right) \mod \G^\sh_{\leq k-1}\,.
\end{align*}
When calculating $\mg^\sh(e_{k_1}\shh e_{k_2}e_{k_3}e_{k_4})$ and $\mg^\sh(e_{k_1}\shh e_{k_2}e_{k_3}e_{k_4})$ one derives again the the quasi-shuffle products  and then apply Lemma \ref{lem:g10ingsh} to argue why the appearing error terms of the form $g^{(1,0)}_{a,b}$ and $g^{(0,1)}_{a,b}$ with $a,b \geq 2$ vanish. 
\end{proof}

\begin{remark}
Since it is expected that $\mes_k$ and $\bigslant{\G^\sh_{\leq k} }{\G^\sh_{\leq k-1} }$ are isomorphic as $\Q$-vector spaces, Lemma \ref{lem:gdsh1} can be used to guess which extended double shuffle relations are fulfilled by multiple Eisenstein series. In \cite{BT} it is proven, that  for $k_1,k_2,k_3 \geq 2$ 
\begin{equation}\label{eq:dshmes}
G^\sh(\ds(e_{k_1}\,,\,e_{k_2}e_{k_3}))=0 \,.
\end{equation}
But due to Lemma \ref{lem:gdsh1}  i)
it is expected that \eqref{eq:dshmes} also holds for the cases $k_2=1$ and $k_1,k_3 \geq 2$. In other words the triple Eisenstein series may satisfy all finite double shuffle relations. The special case $k_2=1$ and $k_1=k_3=2$ of \eqref{eq:dshmes} was proven in \cite{B} Example 6.14.
\end{remark}

\begin{theorem}\label{thm:dgshlen2len3}
For $k_1, k_2, k_3\geq 2$ and $\dif = q \frac{d}{dq}$ we have
\begin{enumerate}[i)]
	\item $\begin{aligned}[t]
	\dif  g^\sh_{k_1,k_2} &\equiv 2 k_1 \left( \mg^\sh(\ds(e_1,e_{k_1+1} e_{k_2})) -  \mg^\sh(\ds(e_{k_2},e_{k_1+1} e_{1}))  \right) \\ 
	&+ 2 k_2 \cdot \mg^\sh (\ds(e_1,e_{k_1} e_{k_2+1}))  \qquad \qquad \qquad \qquad \qquad  \qquad \,\mod \G^\sh_{\leq k_1+k_2+1}
	\end{aligned}$
	\item  $\begin{aligned}[t]
	\dif  g^\sh_{k_1,k_2,k_3} \equiv \,\,  &2k_1 \cdot\mg^\sh( \ds(e_1,e_{k_1+1} e_{k_2} e_{k_3}) + \ds(e_{k_3},e_{k_2} e_{k_1+1} e_{1}) )\\ 
	+&2k_1 \cdot\mg^\sh( \ds(e_{k_3}, e_{k_1+1+k_2} e_{1}) - \ds(e_{k_1+1} e_1, e_{k_2} e_{k_3})  ) \\
	+&2 k_2  \cdot\mg^\sh(\ds(e_1,e_{k_1} e_{k_2+1} e_{k_3}) - \ds(e_{k_3},e_{k_1} e_{k_2+1} e_{1})  ) \\ 
	+ &2 k_3 \cdot \mg^\sh (\ds(e_1,e_{k_1} e_{k_2} e_{k_3+1}))  \qquad \qquad \qquad \qquad  \qquad \mod \G^\sh_{\leq k_1+k_2+k_3+1}
	\end{aligned}$
\end{enumerate}
\end{theorem}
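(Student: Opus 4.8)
The plan is to reduce both identities, exactly as Proposition \ref{prop:dgk} reduced the depth one case, to the explicit ``failure of double shuffle'' formulas collected in Lemma \ref{lem:gdsh1}. The starting point is \eqref{eq:difong}: since $k_1,\dots,k_r\geq 2$ forces $g^\sh_{k_1,\dots,k_r}=g_{k_1,\dots,k_r}$ by Theorem \ref{thm:gshfacts} ii), we have
\[
\dif g^\sh_{k_1,k_2}=k_1\, g^{(1,0)}_{k_1+1,k_2}+k_2\, g^{(0,1)}_{k_1,k_2+1}\,,
\]
\[
\dif g^\sh_{k_1,k_2,k_3}=k_1\, g^{(1,0,0)}_{k_1+1,k_2,k_3}+k_2\, g^{(0,1,0)}_{k_1,k_2+1,k_3}+k_3\, g^{(0,0,1)}_{k_1,k_2,k_3+1}\,.
\]
Each double-indexed series on the right carries a single $d_j=1$, and this is precisely the shape of the error terms produced by Lemma \ref{lem:gdsh1}. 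So it suffices to invert that Lemma: to exhibit, modulo the appropriate $\G^\sh$-space, a $\Q$-linear combination of $\mg^\sh(\ds(\cdots))$ terms in which all unwanted double-indexed series cancel and only the required ones survive with the right coefficients. One checks at the outset that every $\ds$-argument below has total weight $k_1+\dots+k_r+2$, matching the weight of the left-hand side, and that in each argument exactly one index equals $1$ (using $k_1+1,k_2+1,k_1+1+k_2\geq 2$), so the hypotheses of Lemma \ref{lem:gdsh1} are met and everything is taken modulo $\G^\sh_{\leq k_1+\dots+k_r+1}$.

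First I would settle the depth two identity. Applying Lemma \ref{lem:gdsh1} i) to the three arguments in the statement gives, modulo $\G^\sh_{\leq k_1+k_2+1}$,
\begin{align*}
\mg^\sh(\ds(e_1,e_{k_1+1}e_{k_2}))&\equiv \tfrac12\, g^{(0,1)}_{k_1+1,k_2}\,,\\
\mg^\sh(\ds(e_{k_2},e_{k_1+1}e_{1}))&\equiv \tfrac12\big(g^{(0,1)}_{k_1+1,k_2}-g^{(1,0)}_{k_1+1,k_2}\big)\,,\\
\mg^\sh(\ds(e_1,e_{k_1}e_{k_2+1}))&\equiv \tfrac12\, g^{(0,1)}_{k_1,k_2+1}\,.
\end{align*}
The first two combine to $2k_1\cdot\tfrac12\, g^{(1,0)}_{k_1+1,k_2}=k_1 g^{(1,0)}_{k_1+1,k_2}$, the $g^{(0,1)}_{k_1+1,k_2}$ contributions cancelling, while the third yields $k_2 g^{(0,1)}_{k_1,k_2+1}$; together they reproduce $\dif g^\sh_{k_1,k_2}$.

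For depth three the same mechanism is used, now invoking all of i), ii), iii). Grouping the proposed right-hand side by the coefficients $2k_1,2k_2,2k_3$, the $2k_3$-block is a single term, mapped by Lemma \ref{lem:gdsh1} ii) to $\tfrac12\, g^{(0,0,1)}_{k_1,k_2,k_3+1}$; the $2k_2$-block is a difference of two ii)-terms whose $g^{(0,0,1)}_{k_1,k_2+1,k_3}$ parts cancel, leaving $\tfrac12\, g^{(0,1,0)}_{k_1,k_2+1,k_3}$. These directly give the $k_2$ and $k_3$ summands of \eqref{eq:difong}. The $2k_1$-block is the delicate one, and I expect the bookkeeping there to be the main obstacle: it couples two depth $(1,3)$ arguments (via ii)), one depth $(1,2)$ argument (via i)), and the mixed depth $(2,2)$ argument $\mg^\sh(\ds(e_{k_1+1}e_1,e_{k_2}e_{k_3}))$ (via iii)). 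The iii)-term is the crux, as it alone produces the full package $g^{(0,0,1)}_{k_1+1,k_2,k_3},\,g^{(1,0,0)}_{k_1+1,k_2,k_3},\,g^{(0,0,1)}_{k_2,k_1+1,k_3},\,g^{(0,1,0)}_{k_2,k_1+1,k_3}$ plus the reduced-depth pair $g^{(0,1)}_{k_1+1+k_2,k_3},\,g^{(1,0)}_{k_1+1+k_2,k_3}$. The roles of the other three arguments are precisely to cancel all of these except $g^{(1,0,0)}_{k_1+1,k_2,k_3}$: subtracting the iii)-term while adding the two ii)-terms kills both $g^{(0,0,1)}$'s and the $g^{(0,1,0)}_{k_2,k_1+1,k_3}$, and the i)-term $\ds(e_{k_3},e_{k_1+1+k_2}e_1)$ removes the remaining pair $g^{(0,1)}_{k_1+1+k_2,k_3}-g^{(1,0)}_{k_1+1+k_2,k_3}$. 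What survives is $\tfrac12\, g^{(1,0,0)}_{k_1+1,k_2,k_3}$, so the block equals $k_1 g^{(1,0,0)}_{k_1+1,k_2,k_3}$ and the three blocks together recover $\dif g^\sh_{k_1,k_2,k_3}$. The only point requiring care beyond this cancellation is that no term of weight exceeding $k_1+k_2+k_3+1$ is introduced outside the listed principal terms, which is automatic since Lemma \ref{lem:gdsh1} is stated modulo exactly that space.
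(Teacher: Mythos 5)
Your proposal is correct and follows exactly the paper's own route: reduce via \eqref{eq:difong} and Theorem \ref{thm:gshfacts} ii) to the double-indexed series $g^{(1,0)}_{k_1+1,k_2}$, $g^{(0,1)}_{k_1,k_2+1}$ (resp.\ their depth-three analogues), then invert Lemma \ref{lem:gdsh1} to express these as the stated combinations of $\mg^\sh(\ds(\cdots))$ terms. In fact you go further than the paper, which omits the depth-three bookkeeping as ``easy but messy''; your identification of the cancellation pattern in the $2k_1$-block (the iii)-term producing the full package, the two ii)-terms and the i)-term killing everything but $g^{(1,0,0)}_{k_1+1,k_2,k_3}$) is exactly the omitted computation and it checks out.
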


\begin{proof} i) Since for  $k_1, k_2\geq 2$ it is $g^\sh_{k_1,k_3} = g_{k_1,k_2} = g^{(0,0)}_{k_1,k_2}$ we have by \eqref{eq:difong} that
\[ \dif  g^\sh_{k_1,k_2} = k_1 g^{(1,0)}_{k_1+1,k_2} + k_2 g^{(0,1)}_{k_1,k_2+1} \,.\]
By Lemma \ref{lem:gdsh1} we obtain 
\begin{align*}
\frac{1}{2}g^{(1,0)}_{k_1+1,k_2} &\equiv  \mg^\sh(\ds(e_1,e_{k_1+1} e_{k_2})) -  \mg^\sh(\ds(e_{k_2},e_{k_1+1} e_{1})   \mod \G^\sh_{\leq k_1+k_2+1}\,, \\
\frac{1}{2}g^{(0,1)}_{k_1,k_2+1} &\equiv \mg^\sh (\ds(e_1,e_{k_1} e_{k_2+1}) \mod \G^\sh_{\leq k_1+k_2+1} \,,
\end{align*}
from which the statement follows.

ii) Similar to i) one uses Lemma \ref{lem:gdsh1} to get explicit formulas for  $g^{(1,0,0)}_{k_1+1,k_2,k_3}$, $g^{(0,1,0)}_{k_1,k_2+1,k_3}$ and $g^{(0,0,1)}_{k_1,k_2,k_3+1}$, which we will omit here since the calculation is easy but messy.
\end{proof}

From Theorem \ref{thm:dgshlen2len3} the statement of Theorem \ref{thm:intro} ii) follows.

\begin{ex}
\begin{align*}
\dif  g^\sh_{2,2} \equiv \,\,\,&4 g^\sh_{2, 4} + 4 g^\sh_{3, 3} + 4 g^\sh_{4, 2} - 4 g^\sh_{5, 1} \\&- 
4 g^\sh_{1, 2, 3} + 4 g^\sh_{1, 3, 2} + 24 g^\sh_{1, 4, 1} - 
4 g^\sh_{2, 1, 3} - 4 g^\sh_{2, 2, 2} + 8 g^\sh_{2, 3, 1} \, \mod\G^\sh_{\leq 5}
\end{align*}
\end{ex}

\begin{conjecture}
	 For $k_1,k_2,k_3 \geq 2$ the derivative of the Double and Triple Eisenstein series  are given by
	 \begin{align*}
	 (- 2\pi i)^2 \dif G^\sh_{k_1,k_2} =\,\, &2k_1 \left( G^\sh(\ds(e_1,e_{k_1+1} e_{k_2})) - G^\sh(\ds(e_{k_2},e_{k_1+1} e_{1}))  \right) \\ 
	 + &2 k_2 \cdot G^\sh (\ds(e_1,e_{k_1} e_{k_2+1})) \,,
	 \end{align*} and
	 \begin{align*}		
	 (- 2\pi i)^2 \dif G^\sh_{k_1,k_2,k_3} =\,\, &2k_1 \cdot G^\sh( \ds(e_1,e_{k_1+1} e_{k_2} e_{k_3}) + \ds(e_{k_3},e_{k_2} e_{k_1+1} e_{1}) )\\ 
	 +&2k_1 \cdot G^\sh( \ds(e_{k_3}, e_{k_1+1+k_2} e_{1}) - \ds(e_{k_1+1} e_1, e_{k_2} e_{k_3})  ) \\
	 +&2 k_2  \cdot G^\sh(\ds(e_1,e_{k_1} e_{k_2+1} e_{k_3}) - \ds(e_{k_3},e_{k_1} e_{k_2+1} e_{1})  ) \\ 
	 + &2 k_3 \cdot G^\sh (\ds(e_1,e_{k_1} e_{k_2} e_{k_3+1}))  \,.
	 \end{align*}
\end{conjecture}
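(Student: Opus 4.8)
The plan is to reduce both derivatives, via the elementary formula \eqref{eq:difong}, to the problem of expressing a single bi-bracket with exactly one raised upper index in terms of the $\mg^\sh(\ds(\cdot,\cdot))$, and then to read off these expressions by inverting Lemma \ref{lem:gdsh1}. The starting observation is that for all $k_j\geq 2$ one has $g^\sh_{k_1,\dots,k_r}=g^{(0,\dots,0)}_{k_1,\dots,k_r}$ by Theorem \ref{thm:gshfacts} ii), so that \eqref{eq:difong} gives
\[ \dif g^\sh_{k_1,k_2}=k_1\,g^{(1,0)}_{k_1+1,k_2}+k_2\,g^{(0,1)}_{k_1,k_2+1} \]
and, in the same way, $\dif g^\sh_{k_1,k_2,k_3}$ equals $k_1\,g^{(1,0,0)}_{k_1+1,k_2,k_3}+k_2\,g^{(0,1,0)}_{k_1,k_2+1,k_3}+k_3\,g^{(0,0,1)}_{k_1,k_2,k_3+1}$. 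Each summand carries a single upper index equal to $1$, and Lemma \ref{lem:gdsh1} is precisely an identity relating such bi-brackets to $\mg^\sh(\ds(\cdot,\cdot))$; the whole proof is the inversion of these identities.

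For statement i) I would specialise Lemma \ref{lem:gdsh1} i) in two ways. Setting the first index to $1$ (with the other two $\geq 2$) isolates $g^{(0,1)}_{a,b}\equiv 2\,\mg^\sh(\ds(e_1,e_a e_b))$, while setting the last index to $1$ gives $\tfrac12\bigl(g^{(0,1)}_{a,b}-g^{(1,0)}_{a,b}\bigr)\equiv\mg^\sh(\ds(e_b,e_a e_1))$; subtracting recovers $g^{(1,0)}_{a,b}$. Substituting $a=k_1,\,b=k_2+1$ in the first and $a=k_1+1,\,b=k_2$ in the second, and weighting by $k_2$ and $k_1$, assembles statement i) with no further input.

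Statement ii) follows the same philosophy but needs a short elimination chain, and this is where the real work lies. The terms $g^{(0,0,1)}_{a,b,c}$ and $g^{(0,1,0)}_{a,b,c}$ (all indices $\geq 2$) are read off from Lemma \ref{lem:gdsh1} ii) by setting its first, respectively last, index to $1$, exactly as in depth two. The genuinely harder term is $g^{(1,0,0)}_{a,b,c}$, which does not occur in part ii) at all. To reach it I would invoke part iii) with $k_2=1$: that identity expresses $\mg^\sh(\ds(e_a e_1,e_b e_c))$ as a combination of $g^{(1,0,0)}_{a,b,c}$ together with the already-known terms $g^{(0,0,1)}_{a,b,c}$, $g^{(0,0,1)}_{b,a,c}$, $g^{(0,1,0)}_{b,a,c}$ and the depth-two corrections $g^{(0,1)}_{a+b,c}$, $g^{(1,0)}_{a+b,c}$. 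Solving for $g^{(1,0,0)}_{a,b,c}$ and back-substituting every previously obtained expression yields a closed formula purely in the $\mg^\sh(\ds(\cdot,\cdot))$; with $a=k_1+1,\,b=k_2,\,c=k_3$ this is exactly the $2k_1$-block of ii). I expect this back-substitution, and the bookkeeping of signs among the four resulting $\ds$-terms, to be the principal obstacle, though it is purely mechanical once the order of elimination is fixed.

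Finally two points of care. Every specialisation must honour the hypothesis of Lemma \ref{lem:gdsh1} that exactly one index equals $1$; since $k_1,k_2,k_3\geq 2$ the only such index is the auxiliary one we introduce, so this is automatic. Moreover the depth-two corrections $g^{(0,1)}_{a+b,c}$, $g^{(1,0)}_{a+b,c}$ occurring in part iii) share the top weight $a+b+c+1$ of the target, so they are genuine contributions and must be rewritten through the depth-two identities above rather than discarded. Lemma \ref{lem:g10ingsh} ensures that the truly lower-weight error terms produced along the way lie in $\G^\sh$ of one lower weight, which legitimises working throughout modulo $\G^\sh_{\leq k_1+\dots+k_r+1}$.
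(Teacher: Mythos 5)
Your computation is, in substance, a correct reconstruction of the paper's proof of Theorem \ref{thm:dgshlen2len3}: reduce $\dif g^\sh$ via \eqref{eq:difong} to bi-brackets with a single raised upper index, then invert Lemma \ref{lem:gdsh1} (using part iii) with the middle index set to $1$ to isolate $g^{(1,0,0)}_{a,b,c}$) to express those bi-brackets through $\mg^\sh(\ds(\cdot,\cdot))$ modulo lower weight. That part is fine and agrees with what the paper does. But it does not prove the statement in question, which is a \emph{conjecture} about the multiple Eisenstein series $G^\sh_{k_1,\dots,k_r}\in\C[[q]]$, and which the paper deliberately leaves unproven. Your argument silently replaces $G^\sh$ by $g^\sh$ and an exact identity in $\C[[q]]$ by a congruence modulo $\G^\sh_{\leq k_1+\dots+k_r+1}$. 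The bridge between these two settings is exactly Question \ref{qu:mes}: that the map $F$ sending $G^\sh_{k_1,\ldots,k_r}$ to $g^\sh_{k_1,\ldots,k_r}$ gives an isomorphism $\mes_k \rightarrow \bigslant{\G^\sh_{\leq k}}{\G^\sh_{\leq k-1}}$ and intertwines $(2\pi i)^2\dif$ on $\mes$ with $\dif$ on the quotient. Both assertions are open (supported only by numerical experiments), so nothing available in the paper lets you transport the $g^\sh$-computation to $G^\sh$.

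The gap is not a formality. First, a congruence modulo $\G^\sh_{\leq k-1}$ says nothing about the lower-weight terms, whereas the conjectured identities for $G^\sh$ are exact; the discarded terms would have to match precisely the multiple-zeta-value-weighted lower-depth contributions in the Fourier expansions of the $G^\sh$, and your argument has no control over them. Second, even in depth one, where the exact formula $(2\pi i)^2 \dif G^\sh_{k} = 2k\cdot G^\sh(\ds(e_1,e_{k+1}))$ \emph{is} a theorem in the paper, it is not deduced from the $g^\sh$-congruence of Proposition \ref{prop:dgk}; it required an independent proof (Kaneko's argument, or the explicit Fourier expansions of double Eisenstein series in [BT]). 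For depths two and three no such independent argument is known, which is precisely why the statement is labelled a conjecture. What your proposal establishes is Theorem \ref{thm:dgshlen2len3}; to upgrade it to the conjecture you would need to prove Question \ref{qu:mes} ii) and iii), or work directly with the Goncharov-coproduct definition of $G^\sh$.
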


%
%



\begin{thebibliography}{99}

 
 \bibitem[Ba]{Ba} H.~Bachmann: 
 {\itshape Multiple Zeta-Werte und die Verbindung zu Modulformen durch Multiple Eisensteinreihen}, Master thesis, Hamburg University (2012). 
 
\bibitem[B]{B}  H.~Bachmann: 
{\itshape The algebra of bi-brackets and regularized multiple Eisenstein series}, arXiv:1504.08138 [math.NT].

\bibitem[BK]{BK}  H.~Bachmann, U.~K\"uhn:
{\itshape The algebra of generating functions for multiple divisor sums and applications to multiple zeta values}, The Ramanujan Journal, August 2016, Volume 40, Issue 3, pp 605-648.

\bibitem[BT]{BT} H.~Bachmann, K.~Tasaka:
{\itshape The double shuffle relations for multiple Eisenstein series}, arXiv:1501.03408 [math.NT]. To appear in Nagoya Math. J.. 


\bibitem[GKZ06]{gkz} H.~Gangl, M.Kaneko, D. Zagier:
{\it Double zeta values and modular forms}, in "Automorphic forms and zeta functions" World Sci. Publ., Hackensack, NJ (2006), 71--106.

\bibitem[Ho00]{H1} M.E.~Hoffman: 
{\itshape Quasi-shuffle products}. J. Algebraic Combin. 11(1) (2000), 49--68.

\bibitem[IKZ06]{IKZ} K.~Ihara, M.~Kaneko, D.~Zagier: 
{\itshape Derivation and double shuffle relations for multiple zeta values}, Compositio Math. 142 (2006), 307--338.


\bibitem[OZ]{OZ} Y.~Ohno, W.~Zudilin:
{\itshape Zeta stars}, Communication in number theory and physics, Volume 2, Number 2, 325347, 2008. 

\bibitem[Zh]{Zh} J.~Zhao:
{\itshape Uniform approach to double shuffle and duality relations of various q-analogs of multiple zeta values via Rota-Baxter algebras}, preprint, arXiv:1412.8044 [math.NT].


\end{thebibliography}
\end{document}